\pdfoutput=1 
\documentclass{sl_style/sl}     

\usepackage{sl_style/slsec}     
\usepackage{sl_style/stud_log} 
\usepackage{sl_style/slfoot}
\usepackage{sl_style/slthm}     
                         

\usepackage[T1]{fontenc}
\usepackage{lmodern}
\usepackage{microtype}
\overfullrule=1mm

\usepackage{enumerate}
\usepackage{booktabs}
\usepackage{amsmath}
\usepackage{amssymb}
\usepackage{mathrsfs}

\theoremstyle{definition}
\newtheorem{theorem}{Theorem}[section]

\newtheorem{lemma}[theorem]{Lemma}
\newtheorem{proposition}[theorem]{Proposition}

\theoremstyle{definition}
\newtheorem{remark}[theorem]{Remark}
\newtheorem{example}[theorem]{Example}
\newtheorem{definition}[theorem]{Definition}
\newtheorem{convention}[theorem]{Convention}

\newcommand{\0}{\varnothing}
\def\ie{{i.e.}}
\def\eg{{e.g.}}

\newcommand{\mop}{\mathop{\Box}\nolimits}
\newcommand{\mord}{\mathord{\Box}}
\newcommand{\power}{\mathscr{P}}
\newcommand{\zenbu}[1]{\mathop{\forall#1}}
\newcommand{\aru}[1]{\mathop{\exists#1}}
\newcommand{\ess}{\mathrm{e}}
\newcommand{\Th}{\mathrm{Th}}
\newcommand{\Def}{\mathrm{Def}}
\newcommand{\uf}{\mathrm{Uf}}
\newcommand{\Uf}{\uf}
\DeclareMathOperator{\ue}{ue}
\newcommand{\tp}{\mathrm{tp}}
\newcommand{\closed}[1]{E_{#1}}
\hyphenation{non-empty}



\setcounter{vol}{82}    
\setcounter{rok}{2006}  
 

\HeadingsInfo{Yamamoto}{Correspondence, Canonicity, and Model Theory for Monotonic Modal Logics}

\begin{document}

\setcounter{page}{1}     

 

\AuthorTitle{Kentar\^o Yamamoto}{Correspondence, Canonicity, and \newline Model Theory for Monotonic Modal Logics}

   
\PresentedReceived{Name of Editor}{December 1, 2005}

\begin{abstract}
  We investigate the role of \emph{coalgebraic predicate logic},
  a logic for neighborhood frames first proposed by Chang,
  in the study of monotonic modal logics.
  We prove analogues of the Goldblatt-Thomason Theorem 
  and Fine's Canonicity Theorem
  for classes of monotonic neighborhood frames
  closed under elementary equivalence in coalgebraic predicate logic.
  The elementary equivalence here can be relativized
  to the classes of monotonic, quasi-filter, augmented quasi-filter, filter, or augmented filter
  neighborhood frames, respectively.
  The original, Kripke-semantic versions of the theorems follow
  as a special case concerning
  the classes of augmented filter neighborhood frames.
\end{abstract}

\Keywords{    modal logic, canonicity, Fine's theorem, Goldblatt-Thomason theorem, neighborhood frames}

\section{Introduction}
Monotonic modal logics generalize normal modal logics
by dropping the K axiom $\mord(p\to q)\to (\mord p\to\Box q)$ 
and instead requiring only that $\vdash \phi \to \psi$ imply $\vdash \mord \phi \to \mord \psi$.
There are a number of reasons for relaxing the axioms of normal modal logics
and considering monotonic modal logics.
For instance, monotonic modal logics are considered more appropriate
to describe the ability of agents or systems
to make certain propositions true in the context of games and open systems~%
\cite{Parikh85thelogic,pauly01:_logic_social_softw,Alur97alternating-timetemporal}.
The standard semantics for monotonic modal logics is provided
by monotonic neighborhood frames (see, e.g., \cite{hansen03:_monot_modal_logic}).

Just as the first-order language with a relation symbol
is a useful correspondence language for Kripke frames,
it is natural to consider what would be a useful correspondence language 
for monotonic neighborhood frames.
Litak et al.~\cite{2017arXiv170103773L} studied \emph{coalgebraic predicate logic} (CPL)
as a logic that plays that role
and proved a characterization theorem in the style of van Benthem and Rosen~\cite{doi:10.1093/logcom/exv043}.
In this article, we continue that path for monotonic neighborhood frames
and prove variants 
 of the Goldblatt-Thomason theorem~\cite{10.1007/BFb0062855} and the Fine canonicity theorem~\cite{article} 
in the setting of coalgebraic predicate logic.

We will deal with a relativized notion of CPL-elementarity,
relativized to subclasses of the class of monotonic neighborhood frames.
There are several important subclasses to consider:
the class of filter neighborhood frames,
providing a more general  semantics~\cite{10.2307/20014778,gerson76:_neigh_frame_t_no_equiv_relat_frame} for normal modal logics than relational semantics;
the class of quasi-filter neighborhood frames,
providing a semantics for regular modal logics;
the class of augmented quasi-filter neighborhood frames,
providing a less general semantics for regular modal logics;
and the class of augmented filter neighborhood frames,
which are Kripke frames in disguise
\cite{chellas80:_modal_logic,pacuit_neighborhood_2017}.

\begin{table}[htbp]
  \centering
  \small
  \begin{tabular}{lp{.65\textwidth}}  \toprule
    Subclass & Closed under~... \\ \midrule
    monotonic &   supersets \\
    quasi-filter & supersets, intersections of nonempty finite families of neighborhoods \\
    augmented quasi-filter &  supersets, intersections of nonempty families of neighborhoods \\
    filter &   supersets,  intersections of finite families of neighborhoods \\
    augmented filter &   supersets, intersections of families of neighborhoods \\ \bottomrule
  \end{tabular}
  \caption{Classes of monotonic neighborhood frames and their definitions}
  \label{tab:defs}
\end{table}

The analogue of the Goldblatt-Thomason theorem in this article
is that a class of monotonic neighborhood frames
closed under CPL-elementarity relative to
any of the classes of neighborhood frames in Table~\ref{tab:defs}
is modally definable if and only if 
it is closed under disjoint unions, bounded morphic images,
and generated subframes,
and it reflects ultrafilter extensions;
and the analogue of Fine's theorem we will prove
states that a sufficient condition for the canonicity
of a monotonic modal  logic is that
it is complete with respect to the class of monotonic neighborhood frames it defines
and that that class is closed under CPL-elementarity relative to
any of the classes of neighborhood frames in Table~\ref{tab:defs}.

The relevance of coalgebraic predicate logic
in this article is
that many monotonic modal logics
define classes of monotonic neighborhood frames
that are CPL-elementary.
For instance, the monotonic modal logics
axiomatized by  formulas of the form
\begin{equation}
  \label{eq:veryfirst}
  \langle \text{purely propositional positive formula} \rangle
  \rightarrow \langle \text{positive formula} \rangle
\end{equation}
are determined by CPL-elementary classes of monotonic neighborhood frames
(see Remark~\ref{rem:important}).
In addition, relative to the class of augmented quasi-filter frames,
all monotonic modal logics axiomatized by Sahlqvist formulas
are CPL-elementarily determined~%
(see Example~\ref{exa:T}). 
Further discussion regarding the relevance of this language
in the context of Fine's theorem is in Remark~\ref{rem:excuse}.

Since augmented filter frames are Kripke frames in disguise
(see also Example~\ref{exa:T}),
our result regarding classes elementary relative to the class of
augmented filter frames generalizes
the original, Kripke-semantic Goldblatt-Thomason theorem.
Also,
our Goldblatt-Thomason theorem concerns elementary classes
like the original theorem,
whereas some existing Goldblatt-Thomason theorems
such as \cite{lmcs:1167} or \cite{kurz2007goldblatt}
deal with classes closed under ultrafilter extensions.

The article is organized as follows.
In \S~\ref{sec:preliminaries},
we recall standard concepts
in the semantics of monotonic modal logic
and introduce the language for neighborhood frames.
In \S~\ref{sec:model-theory-neighb},
we give an overview of the model theory of neighborhood frames
for this language.
We also define a two-sorted first-order language
(Definition~\ref{defi:twosorted})
and a translation of coalgebraic predicate logic into it
(Proposition~\ref{prop:translation}),
which are used later to explain the existence of $\aleph_0$-saturated models
of languages of coalgebraic predicate logic
(Proposition~\ref{prop:saturation}).
In \S~\ref{sec:canonicity},
we prove the main lemmas of this articles.
In \S~\ref{sec:appl-main-lemm},
we give the applications of the main lemmas,
which are analogues of the Goldblatt-Thomason Theorem
and Fine's Canonicity Theorem.

The presentation of the results in this article
does not presuppose the reader's prior knowledge of coalgebras
or coalgebraic predicate logic.

\section{Preliminaries}\label{sec:preliminaries}
\subsection{Languages and structures}
In this subsection,
we recall standard definitions in neighborhood semantics of modal logic
and the language coalgebraic predicate logic introduced in \cite{Chang1973-CHAMMT} and \cite{2017arXiv170103773L}
to describe neighborhood frames.

We define languages of coalgebraic predicate logic
relative to sets of nonlogical symbols here;
this is so that we can use expansions of the smallest language
in proofs in \S~\ref{sec:canonicity}.

\begin{definition}\label{def:languages-structures}
  \hfill
  \begin{enumerate}[(i)]
  \item\label{item:CPL}
    Let $\sigma$ be the set of atomic formulas of 
    some language of first-order logic.
    The \emph{language of coalgebraic predicate logic} $L$ \emph{based on} $\sigma$
    is the least set of formulas
    containing $\sigma$ and
    closed under Boolean combinations, existential quantification,
    and formation of formulas of the form $x\mop\lceil y \colon \phi
    \rceil$,
    where $\phi \in L$, and $x$ and $y$ are variables.
    To save space, we sometimes write $x\mop_y\phi$ or even
    $x\mop\phi$
    for $x\mop\lceil y : \phi \rceil$.
    For a language $L_0$ of first-order logic,
    the \emph{language of coalgebraic predicate logic based on} $L_0$
    is defined to be the language of coalgebraic predicate logic based on
    the set of atomic formulas of $L_0$.
    We write $L_=$ for the language
    of coalgebraic predicate logic based on the empty language,
    \ie, the language with just the equality symbol.
  \item
    Let $L_0$ be a language of first-order logic
    and $L$ the language of coalgebraic predicate logic based on $L_0$.
    An \emph{$L$-structure} $F = (F, N^F)$ is an $L_0$-structure $F$
    with an additional datum $N^F : F \to \power(\power(F))$,
    where $\power$ is the powerset operation. 
    The map $N^F$ is called the \emph{neighborhood function} of $F$.
    A set $U \in N^F(w)$ is called a \emph{neighborhood} of $w$.
    If $L_0$ is the empty first-order language,
    the $L$-structures are exactly the \emph{neighborhood frames}.
  \item
    A neighborhood frame $F$ is \emph{monotonic}
    if for every $w \in F$ the family $N^F(w)$ is closed under supersets.
    $F$ is a \emph{quasi-filter} neighborhood frame if for every $w \in F$
    the family $N^F(w)$ is closed under intersections of nonempty finite families of neighborhoods.
    $F$ is a \emph{filter} neighborhood frame if
    it is a quasi-filter frame and
    for every $w \in F$ the family $N^F(w)$ is nonempty.
    $F$ is an \emph{augmented quasi-filter} neighborhood frame if for every $w \in F$
    the family $N^F(w)$ is either empty or a \emph{principal upset}
    in the Boolean algebra $\power(F)$,
    \ie, there exists $U_0 \subseteq F$ such that
    $U \in N^F(w) \iff U_0 \subseteq U$.
    Finally, $F$ is an \emph{augmented filter} neighborhood frame
    if for every $w \in F$ the family $N^F(w)$ is a principal upset.
  \end{enumerate}
\end{definition}

One might object to  calling the language in Definition~\ref{def:languages-structures}(\ref{item:CPL}) the language of ``coalgebraic predicate logic''
since this is essentially what Chang introduced in \cite{Chang1973-CHAMMT}
whereas the language in Litak et al.~\cite{2017arXiv170103773L}
is applicable to general coalgebras.
We use the name ``coalgebraic predicate logic'' in this article
because Chang's language does not have a name,
and technically speaking we do not use Chang's syntax,
which imposes a more strict rule
regarding variables bound by modal operators.

\begin{example}\label{exa:deftopo}
  For a topological space $X = (X, \tau)$,
  we associate a neighborhood frame $X^* = (X, N)$ defined by
  \[
    U \in N(w) \iff w \in U^\circ,
  \]
  where ${}^\circ$ denotes topological interior.
  We call a monotonic neighborhood frame of the form $X^*$ 
  a \emph{topological neighborhood frame}.
  Recall the satisfaction predicate $\Vdash_{\mathrm{top}}$ for
  topological semantics
  and
  the satisfaction predicate $\Vdash_{\mathrm{nbhd}}$ for neighborhood semantics
  (see, e.g., \cite{chellas80:_modal_logic,vanBenthem2007} for more details):
  \[
    M, w \Vdash_{\mathrm{top}} \mord\phi \iff
    w \in \{ w' \mid M, w' \Vdash_{\mathrm{top}}\phi \}^\circ
  \]
  and
  \[
    M', w \Vdash_{\mathrm{nbhd}} \mord\phi \iff
    \{ w' \mid M, w' \Vdash_{\mathrm{nbhd}}\phi \} \in N(w),
  \]
  where $M$ is a topological model,
  $M'$ is a neighborhood model,
  and $N$ is the neighborhood function of the neighborhood frame of $M'$.
  It is then easy to see that for every $w \in X$,
  every modal formula $\phi$,
  every topological model $M$ based on $X$, and
  every neighborhood model $N$ based on $X^*$,
  if the valuations of $M$ and $N$ are the same, then
  \[
    M, w \Vdash_{\mathrm{top}} \phi \iff N,w \Vdash_{\mathrm{nbhd}} \phi.
  \]
\end{example}

\begin{definition}
  Let $L$ be a language of  coalgebraic predicate logic and $F$  an
  $L$-structure.
  We define the satisfaction predicate $F \models \phi$
  for a sentence $\phi \in  L$.
  It is convenient to define the predicate for the expanded language $L(F)$
  of coalgebraic predicate logic.
  In general, for $A \subseteq F$,
  we define $L(A)$ to be the language of coalgebraic predicate logic
  that has all symbols of $L$
  and for each $w \in A$ a constant symbol $w$
  that is intended to be interpreted as $w$ itself.
  Now, $F$ is an $L(F)$-structure in the obvious way.
  We define the satisfaction predicate $F \models \phi$
  for $\phi \in L(F)$.
  The predicate is defined by recursion on $\phi$.
  For symbols of first-order logic in $L$,
  the predicate is defined in the usual way.
  For $\phi = w\mop_y\phi_0$, we define
  \[
    F \models w \mop_y\phi_0(y) \iff
    \phi_0(F) \in N^F(w)
  \]
  where
  \[
    \phi_0(F) = \{v \in F \mid F \models \phi_0(v)\}
  \]
  and $\phi_0(v)$ stands for the substitution instance of $\phi_0(y)$
  with $v$ substituted for $y$.
\end{definition}

The use of constant symbols interpreted as themselves is standard practice in model theory (see, e.g., \cite{marker02:_model_theor});
it makes the notation and definitions much simpler, particularly in later parts of this article where we deal with types with parameters.

\begin{example}\label{exa:B}
  Consider the B axiom $p \rightarrow \mord \neg \mord \neg p$.
  We see that this modal formula has a local frame correspondent
  relative to the class of monotonic neighborhood frames
  in the language $L_=$.
  Consider the validity of the B axiom
  for a monotonic neighborhood frame $F$ and $w \in F$.
  By the monotonicity of $F$,
  the usual minimum valuation argument (see, e.g., \cite{Blackburn:2001:ML:381193}) applies:
  the B axiom is valid at $w$ if and only if
  its consequent is true
  under the minimum valuation that makes its antecedent true,
  which is the valuation that sends $p$ to the set $\{w\}$.
  The latter condition is expressible by a formula in $L_=$:
  \[
    w \mop_y (\neg y \mop_z z \neq w).
  \]
\end{example}
\begin{remark}\label{rem:important}
  It can be shown likewise that modal formulas of the form~(\ref{eq:veryfirst})
  have frame correspondents relative
  to the class of monotonic neighborhood frames.
  A formula of the form~(\ref{eq:veryfirst})
  is  what is called a KW formula in \cite{hansen03:_monot_modal_logic}
  and axiomatizes a monotonic modal logic complete with respect to
  the class of monotonic neighborhood frames that it defines.
  Hence, 
  the monotonic modal logics
  axiomatized by such formulas
  are determined by CPL-elementary classes (see Definition~\ref{defi:CPL-elementary}) of monotonic neighborhood frames.
\end{remark}
\begin{example}\label{exa:T}
  Consider the 4 axiom $\mord p \rightarrow \mord\mord p$.
  We show that this modal formula has a local frame correspondent
  relative to the class of augmented quasi-filter neighborhood frames
  in the same language $L_=$ as above.
  Consider the validity of the 4 axiom for an augmented quasi-filter neighborhood frame $F$
  and $w \in F$.
  If $w \in F$ is \emph{impossible},
  \ie, $N^F(w) = \0$,
  then the 4 axiom is valid at $w$.
  Note that  by monotonicity
  $w$ is impossible if and only if $F \not \in N^F(w)$,
  \ie, $F \models \neg w \mop_y y = y$.
  Otherwise,
  we can again use the minimum valuation argument.
  Here,
  the minimum interpretation of $p$ that makes the antecedent true
  is $R[w]$ because $F$ is an augmented quasi-filter neighborhood frame,
  where $R \subseteq F \times F$ is the binary relation defined by
  \begin{equation}
    xRy \iff \{z \in F \mid z \neq y\} \not \in N^F(x)
    \quad(\mathord{\iff} F \models \neg x\mop_z z \neq y).\label{eq:binary}
  \end{equation}
  To summarize, the 4 axiom has the local frame correspondent
  \[
    \neg w \mop \lceil y\colon y = y \rceil \mathrel{\vee}
    (w \mop \lceil y\colon y = y \rceil \mathrel{\wedge}
    w \mop \lceil y_1 \colon
      y_1 \mop \lceil y_2 \colon
       \neg y_2 \mop \lceil z \colon z \neq w\rceil
      \rceil
    \rceil).
    \]
  In fact,
  since the accessibility relation $R$ and the set of impossible worlds
  are definable in $L_=$ as we have seen above,
  the first-order frame correspondence language in \cite{2016arXiv160308202P}
  translates into $L_=$,
  and thus all Sahlqvist formulas have frame correspondents in $L_=$
  relative to the class of augmented quasi-filter neighborhood frames.

  The displayed formula~(\ref{eq:binary}) can be used to define
  the class of augmented quasi-filter neighborhood frames by
  coalgebraic predicate logic as well.
  Write $R[x]$ for the set of $y \in F$ satisfying (\ref{eq:binary})
  for an arbitrary monotonic neighborhood frame $F$ and $x \in F$.
  We see that a monotonic neighborhood frame $F$ is augmented quasi-filter
  if and only if either $N^F(w)$ is impossible, or $R[w] \in N^F(w)$
  for every $w \in F$, \ie, $F$ satisfies the $L_=$-sentence
  \[
    \zenbu{x} [(x \mop_y y = y)
    \mathrel{\to} x \mop_y \neg (x \mop_z z \neq y)].
  \]
  Indeed, we have seen the ``only if'' direction in the last paragraph;
  to see the ``if'' direction,
  observe that $R[w] = \bigcap N^F(w)$.
\end{example}

\begin{example}\label{exa:topo}
  Recall that for a topological space $X$
  the \emph{specialization preorder} of $X$
  is the preorder $\lesssim$ on $X$ defined by
  \[
    x \lesssim y \iff  x \in \overline{\{y\}},
  \]
  where $\overline{(\cdot)}$ denotes topological closure.
  A space $X$ is T${}_0$ if and only if $\lesssim$ is a partial order,
  and $X$ is T${}_1$ if and only if $\lesssim$ is a discrete partial order.
  Note that the specialization preorder of a topological space $X$
  is ``definable'' in coalgebraic predicate logic in the sense that
  \begin{equation}
    x \lesssim y \iff X^* \models \neg x\mop_zz \neq y.\label{eq:annoying}
  \end{equation}
  Hence, the images under ${}^*$ of the classes of T${}_0$ and T${}_1$ spaces
  are CPL-elementary relative to the class of topological neighborhood frames:
  $X$ is T${}_0$ if and only if 
  $X^* \models \zenbu{z}\zenbu{w} %
  (z \lesssim w \land w \lesssim z \rightarrow w = z)$,
  and 
  $X$ is T${}_1$ if and only if 
  $X^* \models \zenbu{z}\zenbu{w} %
  (z \lesssim w \rightarrow w = z)$,
  where $x \lesssim y$ abbreviates the formula of coalgebraic predicate logic
  on the right-hand side of the displayed formula~(\ref{eq:annoying}).
\end{example}

\begin{definition}
  Let $F$ and $F'$ be neighborhood frames.
  A function $f : F \to F'$ is a \emph{bounded morphism}
  if for each $w \in F$:
  \[f^{-1}(U') \in N^{F}(w) \implies U' \in N^{F'}(f(w)) \tag{``forth''}\]
  and
  \[U' \in N^{F'}(f(w)) \implies f^{-1}(U') \in N^{F}(w) \tag{``back''}.\]
\end{definition}

\begin{lemma}[\cite{Dosen1989}]
  Let $F$ and $F'$ be monotonic neighborhood frames
  and $f : F \to F'$ be a function that satisfies the ``forth'' condition.
  Suppose in addition that for all $U' \in N^{F'}(f(w))$
  there exists $U \in N^{F}(w)$ such that $f(U) \subseteq U'$.
  Then $f$ is a bounded morphism.
\end{lemma}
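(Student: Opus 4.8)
The plan is to verify that $f$ is a bounded morphism by checking its two defining conditions. The ``forth'' condition holds by assumption, so the only thing left to establish is the ``back'' condition, namely that $U' \in N^{F'}(f(w))$ implies $f^{-1}(U') \in N^{F}(w)$ for every $w \in F$.

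To prove ``back'', I would fix $w \in F$ together with a neighborhood $U' \in N^{F'}(f(w))$. Applying the additional hypothesis directly yields some $U \in N^{F}(w)$ with $f(U) \subseteq U'$. The key (and essentially only) observation is the elementary set-theoretic fact that $f(U) \subseteq U'$ entails $U \subseteq f^{-1}(U')$: any $x \in U$ satisfies $f(x) \in f(U) \subseteq U'$, hence $x \in f^{-1}(U')$.

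With $U \in N^{F}(w)$ and $U \subseteq f^{-1}(U')$ in hand, the monotonicity of $F$---which guarantees that $N^{F}(w)$ is closed under supersets---immediately gives $f^{-1}(U') \in N^{F}(w)$, as required. This completes the verification of ``back'' and hence shows that $f$ is a bounded morphism.

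I do not anticipate any real obstacle: once the adjunction $f(U) \subseteq U' \iff U \subseteq f^{-1}(U')$ is noted, the argument is a single application of monotonicity. The only points requiring care are to use the correct direction of this adjunction---we need $f(U) \subseteq U'$ in order to conclude $U \subseteq f^{-1}(U')$---and to invoke monotonicity of the \emph{source} frame $F$ rather than the target $F'$, since it is $f^{-1}(U')$ that we are enlarging $U$ into.
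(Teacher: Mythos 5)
Your proof is correct and is essentially identical to the paper's own argument: pass from $f(U)\subseteq U'$ to $U\subseteq f^{-1}(U')$ and then apply monotonicity (closure under supersets) of $N^F(w)$. The only difference is that you spell out the adjunction step and the choice of frame explicitly, which the paper leaves implicit (and where its notation even contains minor typos), so nothing further is needed.
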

\begin{proof}
  By assumption, if $U' \in N^{F'}(w)$,
  then there exists $U$ such that $f^{-1}(U') \supseteq U \in N^G(w)$;
  by monotonicity, we have $f^{-1}(U') \in N^G(w)$.
\end{proof}
Note that bounded morphisms between monotonic neighborhood frames
clearly satisfy the assumption of this lemma.

\subsection{Algebraic concepts}
In this subsection,
we recall some standard definitions from the algebraic treatment of modal logic;
for the standard notions
 that we do not define here, see \cite{venema07:_algeb_coalg}.

 First, we recall basic definitions regarding the algebraic treatment
 of monotonic modal logic.
\begin{definition}
  A \emph{monotonic Boolean algebra expansion}
  (\emph{BAM} for short) $A = (A, \mord^A)$
  is a Boolean algebra $A$ with an additional datum
  $\mord^A : A \to A$, a function that is \emph{monotonic},
  \ie, for all $a, b \in A$ we have
  $a \le b \implies \mord^A (a) \le \mord^A(b)$.
  
\end{definition}

\begin{lemma}
  Let $F$ be a monotonic neighborhood frame.
  The function $\mord^F : \power(F) \to \power(F)$ defined by
  \[
    X \mapsto \{w \in F \mid X \in N^F(w)\}
  \]
  is monotonic.\qed
\end{lemma}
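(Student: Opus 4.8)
The plan is to unwind the two relevant definitions and observe that they match up directly. Recall that monotonicity of the operation $\mord^F$ on the Boolean algebra $\power(F)$ means, with respect to the order $\subseteq$, that $X \subseteq Y$ implies $\mord^F(X) \subseteq \mord^F(Y)$. So I would fix arbitrary sets $X, Y \subseteq F$ with $X \subseteq Y$ and aim to prove the inclusion $\mord^F(X) \subseteq \mord^F(Y)$ by a pointwise argument.

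First I would take an arbitrary $w \in \mord^F(X)$. By the definition of $\mord^F$, this means exactly that $X \in N^F(w)$. Since $F$ is a monotonic neighborhood frame, the family $N^F(w)$ is closed under supersets (Definition~\ref{def:languages-structures}(iii)). As $X \subseteq Y$, the set $Y$ is a superset of a neighborhood of $w$, hence $Y \in N^F(w)$, which is precisely the assertion that $w \in \mord^F(Y)$. Because $w$ was arbitrary, this yields $\mord^F(X) \subseteq \mord^F(Y)$, as desired.

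There is no genuine obstacle here: the content of the lemma is simply that the frame-theoretic notion of monotonicity (closure of each $N^F(w)$ under supersets) translates verbatim into the algebraic notion of monotonicity (order-preservation of $\mord^F$) under the correspondence $X \in N^F(w) \iff w \in \mord^F(X)$. The only thing to be careful about is keeping the quantifier over $w$ separate from the quantifier over the pair $X \subseteq Y$, so that the superset closure is applied at a single fixed world. Accordingly I expect the proof to be a single short paragraph with no calculation.
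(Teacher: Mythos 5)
Your proof is correct and takes the same approach the paper intends: the paper states this lemma with the proof omitted as immediate (the \verb|\qed| in the statement), and your argument---fix $X \subseteq Y$, take $w$ with $X \in N^F(w)$, and apply closure of $N^F(w)$ under supersets to get $Y \in N^F(w)$---is exactly that immediate unwinding of the two definitions of monotonicity.
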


\begin{definition}[\cite{Dosen1989}]
  The \emph{complex algebra} $F^+$ of a monotonic neighborhood frame $F$
  is the BAM $(\power(F), \mord^F)$,
  where $\power(F)$ is the Boolean algebra of the powerset of $F$.
\end{definition}

\begin{proposition}
  Let $F$ and $F'$ be monotonic neighborhood frames.
  A function $f: F \to F'$ is a bounded morphism
  if and only if $f^+ : F'^+ \to F^+$ defined by $f^+(X) = f^{-1}(X)$
  is a homomorphism.\qed
\end{proposition}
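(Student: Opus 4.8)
The plan is to split the claim that $f^+$ is a homomorphism of BAMs into its two constituent requirements: that $f^+$ preserves the Boolean structure, and that it commutes with the monotone operator $\mord$. The first requirement is automatic and uses nothing about $f$: the preimage map $X \mapsto f^{-1}(X)$ preserves complements, unions, and intersections, so $f^+$ is a Boolean algebra homomorphism $\power(F') \to \power(F)$ for any function $f$. Consequently, the entire content of the equivalence concerns the second requirement, and both directions of the proposition reduce to comparing the $\mord$-commutation condition with the ``forth'' and ``back'' clauses.

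First I would expand both sides of the equation $f^+(\mord^{F'}(X)) = \mord^F(f^+(X))$ for an arbitrary $X \subseteq F'$. Unwinding the definitions of $\mord^{F'}$, $\mord^F$, and $f^+$ gives
\[
  f^+(\mord^{F'}(X)) = \{w \in F \mid X \in N^{F'}(f(w))\},
  \qquad
  \mord^F(f^+(X)) = \{w \in F \mid f^{-1}(X) \in N^F(w)\}.
\]
Hence these two subsets of $F$ coincide for every $X \subseteq F'$ exactly when, for all $w \in F$ and all $X \subseteq F'$,
\[
  X \in N^{F'}(f(w)) \iff f^{-1}(X) \in N^F(w).
\]

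Finally I would match this biconditional against the definition of a bounded morphism (writing $U'$ for $X$): the right-to-left implication is precisely the ``forth'' clause, while the left-to-right implication is precisely the ``back'' clause. Thus $f^+$ commutes with $\mord$ if and only if $f$ satisfies both clauses, that is, if and only if $f$ is a bounded morphism; combined with the automatic preservation of Boolean structure, this yields both directions at once. I do not expect a genuine obstacle here. The only points requiring care are keeping the automatic Boolean-homomorphism part cleanly separated from the modal part, and observing that equality of the two subsets of $F$ for each fixed $X$ amounts to the pointwise biconditional holding for every $w$, so that the universal quantifier over $X \subseteq F'$ lines up exactly with the (implicitly universally quantified) clauses in the definition of a bounded morphism.
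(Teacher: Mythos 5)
Your proof is correct, and it is exactly the routine unwinding of definitions that the paper signals by stating the proposition with an omitted proof: the Boolean part is automatic for preimage maps, and the $\mord$-commutation condition, once expanded pointwise, is literally the conjunction of the ``forth'' and ``back'' clauses. Nothing further is needed (note that monotonicity of $F$ and $F'$ plays no role in the equivalence itself; it only guarantees that $F^+$ and $F'^+$ are BAMs).
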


 Since this article concerns canonicity,
 we need to recall definitions regarding canonical extensions.
\begin{definition}
    Let $B$ be a Boolean algebra.
    The \emph{canonical extension} $B^\sigma$ of $B$
    is the Boolean algebra of the powerset
    of the set $\uf(B)$ of ultrafilters in $B$.
    An element of $B^\sigma$ of the form $[a] := \{u \in \uf(B)\mid a \in u\}$
    for a fixed $a \in B$ is called \emph{clopen}.
    Meets and joins of clopen elements of $B^\sigma$
    are \emph{closed} and \emph{open}, respectively.
    The sets of closed and open elements of $B^\sigma$
    are denoted $K(B^\sigma)$ and $O(B^\sigma)$, respectively.
\end{definition}

\begin{proposition}
  For a Boolean algebra $B$, the map $[-] : B \to B^\sigma$ is an embedding.
\end{proposition}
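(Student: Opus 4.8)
The plan is to verify that $[-]$ is an injective homomorphism of Boolean algebras. Since $B^\sigma$ is by definition the powerset algebra $\power(\uf(B))$, its meet, join, and complement are the set-theoretic intersection, union, and relative complement, and its bottom and top are $\0$ and $\uf(B)$. Thus preservation of the Boolean structure amounts to the identities $[a \wedge b] = [a] \cap [b]$, $[a \vee b] = [a] \cup [b]$, $[\neg a] = \uf(B) \setminus [a]$, and $[1] = \uf(B)$ (equivalently $[0] = \0$), each asserting a membership equivalence for an arbitrary ultrafilter $u \in \uf(B)$.

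First I would record the defining closure properties of an ultrafilter $u$: it is upward closed and closed under finite meets, it contains $1$ but not $0$, and it is maximal in the sense that exactly one of $a$ and $\neg a$ lies in $u$. From upward closure and closure under meets one reads off $a \wedge b \in u \iff a \in u$ and $b \in u$, which is the first identity; $1 \in u$ for every $u$ gives $[1] = \uf(B)$; and maximality gives $\neg a \in u \iff a \notin u$, which is the complement identity. The only step requiring a short argument is the join identity: the direction from $a \in u$ or $b \in u$ to $a \vee b \in u$ is upward closure, while the converse uses primality---if $a \vee b \in u$ but $a, b \notin u$, then $\neg a, \neg b \in u$, hence $\neg(a \vee b) = \neg a \wedge \neg b \in u$, contradicting $0 \notin u$. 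These are all routine once the ultrafilter axioms are in place.

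For injectivity I would use the fact that a Boolean homomorphism is injective exactly when its kernel is trivial, so it suffices to show that $[a] = \0$ forces $a = 0$. Arguing contrapositively, suppose $a \neq 0$; then the principal filter $\{b \in B \mid a \le b\}$ is proper, and by the Ultrafilter Lemma (the Boolean Prime Ideal Theorem, provable from Zorn's Lemma) it extends to an ultrafilter $u$ with $a \in u$, whence $u \in [a]$ and $[a] \neq \0$. This invocation of the Ultrafilter Lemma is the one genuinely nontrivial ingredient; everything else is a direct translation of the ultrafilter axioms into set-theoretic operations on $\power(\uf(B))$.
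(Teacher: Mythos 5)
Your proof is correct and complete: it is the standard Stone-representation argument (homomorphism properties from the ultrafilter axioms, injectivity from the Ultrafilter Lemma applied to the principal filter of a nonzero element). The paper does not spell out a proof at all but defers to the literature (Venema's chapter on algebras and coalgebras), where precisely this argument appears, so your proposal supplies the same standard proof the citation points to.
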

\begin{proof}
  See, \eg, \cite{venema07:_algeb_coalg}.
\end{proof}

\begin{definition}[see, e.g., \cite{venema07:_algeb_coalg}]
  \mbox{}
  \begin{enumerate}[(i)]
  \item 
    Let $A = (A, \mord)$ be a BAM.
    The \emph{canonical extension} $A^\sigma = (A^\sigma, \mord^\sigma)$
    of $A$
    is the canonical extension of the Boolean algebra $A$
    expanded by the function $\mord^\sigma$,
    where
    \[
      \mord^\sigma(u)
      = \bigvee_{u \supseteq x \in K(A^\sigma)}
      \bigwedge_{x \subseteq a \in A} \mord(a).
    \]
  \item
    A set $\Delta$ of modal formulas is \emph{canonical}
    if for every BAM $A \models \Delta$ we have $A^\sigma \models \Delta$.
  \end{enumerate}
\end{definition}

\begin{proposition}
  For a BAM $A = (A, \mord)$,
  the function $\mord^\sigma$ is monotonic,
  and thus the canonical extension $A^\sigma = (A^\sigma, \mord^\sigma)$
  is again a BAM.
\end{proposition}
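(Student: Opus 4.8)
The plan is to read monotonicity off directly from the shape of the defining formula, observing that $\mord^\sigma(u)$ is a join of terms indexed by the closed elements lying below $u$, where each individual term depends only on its index and not on $u$ itself. First I would fix notation. Since $A^\sigma$ is by definition the powerset Boolean algebra $\power(\uf(A))$, it is a \emph{complete} Boolean algebra whose order is set inclusion, so every join and meet appearing in the definition of $\mord^\sigma$ exists. For a closed element $x \in K(A^\sigma)$, abbreviate
\[
  g(x) = \bigwedge_{x \subseteq a \in A} \mord(a),
\]
so that, by definition, $\mord^\sigma(u) = \bigvee\{\, g(x) \mid x \in K(A^\sigma),\ x \subseteq u \,\}$. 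The crucial feature is that the summand $g(x)$ makes no reference to $u$.

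Next I would take $u, v \in A^\sigma$ with $u \le v$, i.e.\ $u \subseteq v$, and simply compare the two index sets of the outer joins. If a closed element $x$ satisfies $x \subseteq u$, then $x \subseteq v$ by transitivity of inclusion, whence
\[
  \{\, x \in K(A^\sigma) \mid x \subseteq u \,\}
  \subseteq
  \{\, x \in K(A^\sigma) \mid x \subseteq v \,\}.
\]
Because the join over a larger family in a complete lattice dominates the join over any subfamily, and because the summands $g(x)$ are literally the same on both sides, it follows that
\[
  \mord^\sigma(u) = \bigvee_{x \subseteq u} g(x)
  \;\le\;
  \bigvee_{x \subseteq v} g(x) = \mord^\sigma(v).
\]
This establishes that $\mord^\sigma$ is monotonic, and since $A^\sigma$ is already a Boolean algebra, the pair $(A^\sigma, \mord^\sigma)$ is a BAM, as claimed.

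I do not expect any genuine obstacle here: the argument uses only that $A^\sigma$ is a full powerset (hence complete) and that the outer join ranges over the down-set of closed elements below the argument. Notably, the monotonicity of the original operator $\mord$ on $A$ plays no role in \emph{this} step --- it is needed elsewhere (for instance when relating $\mord^\sigma$ to complex algebras and to the neighborhood semantics), but the monotonicity of $\mord^\sigma$ is a purely structural consequence of its defining supremum. The only point deserving a word of care is the existence of the relevant joins and meets, which is immediate from the completeness of the powerset algebra.
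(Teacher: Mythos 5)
Your proof is correct. Note, however, that the paper does not actually supply an argument for this proposition; it simply cites Venema's chapter on algebras and coalgebras, so your proposal fills in a proof rather than paralleling one. The argument you give is the standard one, and your structural observation is exactly right: writing $\mord^\sigma(u) = \bigvee\{\,g(x) \mid x \in K(A^\sigma),\ x \subseteq u\,\}$ with $g(x) = \bigwedge_{x \subseteq a \in A}\mord(a)$, monotonicity is automatic because enlarging $u$ only enlarges the index set of the outer join, all joins and meets existing since $A^\sigma = \power(\uf(A))$ is complete. In particular you correctly point out that the monotonicity of $\mord$ itself is never used --- the lower canonical extension of \emph{any} unary map is order-preserving by this argument, a fact well known in the canonical-extension literature. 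The only cosmetic point worth flagging is that in the paper's displayed definition the expressions $x \subseteq a$ and $\mord(a)$ should be read via the embedding $[-] : A \to A^\sigma$, i.e.\ as $x \subseteq [a]$ and $[\mord(a)]$; this identification does not affect your argument, since the summand $g(x)$ still depends only on $x$ and not on $u$.
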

\begin{proof}
  See, \eg, \cite{venema07:_algeb_coalg}.
\end{proof}

\begin{remark}
  Canonical extensions can be defined for larger classes of algebras.
  We stick to BAMs in this article
  since they admit the most natural definition for $\mord^\sigma$,
  among other reasons.
\end{remark}

\begin{definition}[\cite{hansen03:_monot_modal_logic}]
  \hfill\label{def:algebraic-concepts}
  \begin{enumerate}[(i)]
  \item\label{item:please}
    Let $A$ be a BAM.
    The \emph{ultrafilter frame} of $A$
    is a neighborhood frame $(\uf (A), N^\sigma)$
    with $N^\sigma$ defined by
    \begin{equation}
      \label{eq:ultraframe}
      U \in N^\sigma (u)
      \iff \aru{K \subseteq U} \zenbu{a \in A}
      ([a] \supseteq K \Rightarrow
      \mord(a) \in u),
    \end{equation}
    where
    $u \in \uf (A)$, and
    $K$ ranges over closed elements of
    $A^\sigma = \power(\uf A)$.
    We denote the ultrafilter frame of $A$ by $\uf(A)$.
  \item
    Let $F$ be a monotonic neighborhood frame.
    The \emph{ultrafilter extension} $\ue F$ of $F$ is $\uf(F^+)$.
  \end{enumerate}
\end{definition}

\begin{proposition}
  Let $A$ be a BAM.
  \begin{enumerate}[(i)]
  \item $\uf (A)$ is monotonic.
  \item $(\uf (A))^+ = A^\sigma$. \qed
  \end{enumerate}
\end{proposition}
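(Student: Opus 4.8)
The plan is to prove both parts by directly unwinding the definitions; the only real work is bookkeeping with the embedding $[-]\colon A\to A^\sigma$, under which we identify $a\in A$ with the clopen $[a]$ and $\mord(a)$ with $[\mord(a)]$, together with the observation that the join and meet appearing in the definition of $\mord^\sigma$ are simply union and intersection in $A^\sigma=\power(\uf A)$.

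For part~(i), monotonicity is immediate from the shape of~(\ref{eq:ultraframe}). If $U\in N^\sigma(u)$, this is witnessed by some closed $K\subseteq U$ such that $[a]\supseteq K$ implies $\mord(a)\in u$ for all $a\in A$. For any $U'\supseteq U$ we still have $K\subseteq U'$, so the very same $K$ witnesses $U'\in N^\sigma(u)$. Hence $N^\sigma(u)$ is closed under supersets for every $u$, which is exactly what monotonicity of $\uf(A)$ requires.

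For part~(ii), I would first note that the underlying Boolean algebra of $(\uf A)^+$ is $\power(\uf A)$, which is by definition the underlying Boolean algebra of $A^\sigma$; so it remains only to check that the two monotonic operators coincide. By the definition of the complex algebra, for $X\subseteq\uf A$ we have
\[
  \mord^{\uf(A)}(X)=\{u\in\uf A\mid X\in N^\sigma(u)\}.
\]
Unwinding~(\ref{eq:ultraframe}), a point $u$ lies in this set iff there is a closed $K\subseteq X$ such that $[a]\supseteq K$ implies $\mord(a)\in u$ for all $a\in A$. On the other hand, reading the join and meet in the definition of $\mord^\sigma$ as union and intersection, $u\in\mord^\sigma(X)$ iff there is a closed $x\subseteq X$ with $u\in[\mord(a)]$ — equivalently $\mord(a)\in u$ — for every $a\in A$ satisfying $x\subseteq[a]$. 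These two membership conditions are literally the same (taking $x=K$), so the operators agree on every $X$, and the two BAMs coincide.

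The main obstacle is not any deep argument but the care needed to see that the defining clause~(\ref{eq:ultraframe}) of the ultrafilter frame was tailored precisely so that its ``existential over closed elements, universal over $A$'' pattern matches the join-of-meets defining $\mord^\sigma$ term by term. Once the identifications $a\leftrightarrow[a]$ and $\mord(a)\leftrightarrow[\mord(a)]$ and $\bigvee,\bigwedge\leftrightarrow\bigcup,\bigcap$ are made explicit, the equality of operators reduces to a direct comparison of the two displayed conditions, with no residual computation.
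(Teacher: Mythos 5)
Your proof is correct: the paper itself omits the argument (the proposition is stated with a \qed as an immediate consequence of the definitions), and your direct unwinding — the same closed witness $K$ giving closure under supersets for (i), and the term-by-term match between the clause~(\ref{eq:ultraframe}) and the join-of-meets defining $\mord^\sigma$, read as union-of-intersections in $\power(\uf A)$, for (ii) — is exactly the routine verification being taken for granted. Nothing is missing; in particular you correctly make explicit the identifications $a \leftrightarrow [a]$ and $\mord(a) \leftrightarrow [\mord(a)]$ that the paper's notation leaves implicit.
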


Finally, we define a few notions necessary to state our
Goldblatt-Thomason theorem.
\begin{definition}[\cite{hansen03:_monot_modal_logic}]
  For a disjoint family $((F_i, N^i) \mid i \in I)$ of monotonic neighborhood frames,
  the \emph{disjoint union} of the family is $(F, N)$,
  where $F = \bigsqcup_i F_i$
  and $N$ is a neighborhood function defined by 
  $U \in N(w) \iff U \cap F_i \in N^i(w)$.
  A monotonic neighborhood frame $F$ is a \emph{bounded morphic image} of another
  $F'$ if there is a surjective bounded morphism $F' \twoheadrightarrow F$.
  A monotonic neighborhood frame $F$ is a \emph{generated subframe} of another $F'$
  if $F \subseteq F'$,
  and the inclusion map $F \hookrightarrow F'$ is a bounded morphism.
\end{definition}

\section{Model theory of neighborhood frames}\label{sec:model-theory-neighb}
In this section,
we recall as well as develop results in the model theory of neighborhood frames
and coalgebraic predicate logic. 
\subsection{Standard concepts in first-order model theory}
Here, we define concepts that have counterparts
in classical first-order model theory.
\begin{definition}
  Let $L$ be a language of coalgebraic predicate logic,
  $F$  an $L$-structure,
  and $A \subseteq F$.
  A subset $X \subseteq F$ is \emph{$A$-definable in $F$}
  if there is an $L$-formula $\phi(x; \bar y)$
  and a tuple $\bar a$ of elements of $A$ (notation: $\bar a \in A$)
  such that $X = \phi(F; \bar a)$.
  A subset $X$ is \emph{definable in $F$} if it is $F$-definable in $F$.
\end{definition}

\begin{definition}
  \hfill
  \begin{enumerate}[(i)]
  \item
    A set of $L$-sentences is called an $L$-\emph{theory}.
  \item 
    Let $L$ be a language of coalgebraic predicate logic
    and $F$ be an $L$-structure.
    The \emph{full $L$-theory} $\Th_L(F)$ of $F$
    is the set of $L$-sentences $\phi$ such that $F \models \phi$.
  \item
    Two $L$-structures $F, F'$ are \emph{$L$-elementarily equivalent},
    or $F \equiv_L F'$, if $\Th_L(F) = \Th_L(F')$.
  \end{enumerate}
\end{definition}
For the rest of this section,
we fix a language $L$ of coalgebraic predicate logic
and a monotonic $L$-structure $F$.
We also let $T = \Th_L(F)$.

\begin{definition}\label{defi:definable}
  Let $A \subseteq F$.
  We write $\Def(F/A)$
  for the Boolean algebra of $A$-definable subset in $F$,
  its operations being the set-theoretic ones.
  We also think of $\Def(F/A)$ as a BAM
  whose monotone operation $\mord$ is defined
  by
  \[
    \mord(\phi(F)) = (\mord\phi)(F)
  \]
  for an $L(A)$-formula $\phi(x)$, where $(\mord \phi)(x)$ is the $L$-formula
  $x \mop_y \phi(y)$.
  (It is easy to see that $\mord: \Def(F/A) \to \Def(F/A)$
    is well defined here.
  This is true of similar definitions that appear in later parts of the article.)
\end{definition}
It is easy to see that $\Def(F/A)$ is a subalgebra of $F^+$
as a BAM.

\begin{proposition}
  Assume $F' \models T$.
  Then $\Def(F/\0)$ and $\Def(F'/\0)$ are isomorphic as BAMs.
\end{proposition}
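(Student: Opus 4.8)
The plan is to write down the evident candidate map---the one that sends an $\0$-definable subset of $F$ to the subset of $F'$ defined by the very same formula---and to check that it is a well-defined bijective BAM-homomorphism. The entire argument hinges on one preliminary observation: $T = \Th_L(F)$ is a \emph{complete} $L$-theory, so that the hypothesis $F' \models T$ actually upgrades to full $L$-elementary equivalence $F \equiv_L F'$.

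First I would record this upgrade. Since $T$ is the full $L$-theory of a structure, for every $L$-sentence $\sigma$ exactly one of $\sigma, \neg\sigma$ belongs to $T$; because $F' \models T$, any $\sigma \in \Th_L(F')$ with $\sigma \notin T$ would force $\neg\sigma \in T$ and hence $F' \models \neg\sigma$, a contradiction. Thus $\Th_L(F') = T$, i.e.\ $F \equiv_L F'$.

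Next I would define the map. Every element of $\Def(F/\0)$ has the form $\phi(F)$ for an $L$-formula $\phi(x)$ in one free variable (there are no parameters, since $L(\0) = L$), and likewise for $F'$; set $h(\phi(F)) = \phi(F')$. To see that $h$ is simultaneously well defined and injective, note that for $L$-formulas $\phi(x),\psi(x)$ the expression $\forall x\,(\phi(x) \leftrightarrow \psi(x))$ is an $L$-sentence, and
\[ \phi(F) = \psi(F) \iff F \models \forall x\,(\phi \leftrightarrow \psi) \iff F' \models \forall x\,(\phi \leftrightarrow \psi) \iff \phi(F') = \psi(F'), \]
where the middle equivalence is precisely $F \equiv_L F'$. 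Surjectivity is immediate, as every element of $\Def(F'/\0)$ is $\psi(F')=h(\psi(F))$ for some $L$-formula $\psi$. Finally, $h$ respects the BAM structure: preservation of the Boolean operations and constants follows from the semantic clauses for the connectives (e.g.\ $\phi(F)\cap\psi(F)=(\phi\wedge\psi)(F)$ maps to $(\phi\wedge\psi)(F')=\phi(F')\cap\psi(F')$), and preservation of $\mord$ is built into the definition of the monotone operation on both algebras, since $\mord(\phi(F))=(\mord\phi)(F)$ with $\mord\phi = x\mop_y\phi(y)$ is carried by $h$ to $(\mord\phi)(F')=\mord(\phi(F'))$.

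I expect no real obstacle here; the only step demanding care is the completeness remark that promotes ``$F' \models T$'' to genuine elementary equivalence, as it is exactly this that makes both directions of the displayed chain---and hence well-definedness together with injectivity of $h$---go through at once.
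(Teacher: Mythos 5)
Your proof is correct. The paper states this proposition without proof (treating it as immediate), and your argument---upgrading $F' \models T$ to full elementary equivalence $F \equiv_L F'$ via completeness of $T = \Th_L(F)$, then checking that $\phi(F) \mapsto \phi(F')$ is a well-defined bijective homomorphism respecting both the Boolean operations and $\mord$---is precisely the standard argument the paper intends.
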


 Types play an important r\^ole in the proof of the original theorem of Fine
 as well as in this article.
\begin{definition}
    \hfill
  \begin{enumerate}[(i)]
  \item
    The \emph{Stone space $S_1(T)$ of $1$-types over $\0$ for $T$} is
    the ultrafilter frame $\uf(\Def(F/\0))$ of $\Def(F/\0)$.
    (Note that if $F'$ is such that $\Th_L(F') = T$, 
    then $\Uf(\Def(F'/\0)) = \Uf(\Def(F/\0))$ 
    and that $S_1(T)$ is, therefore,
    defined uniquely regardless of the choice of
    $F \models T$.)
    We consider $S_1(T)$ as a topological space whose open subsets
    are exactly the open elements
    of $(\Uf(\Def(F/\0)))^+ = (\Def(F/\0))^\sigma$.
    An element $p \in S_1(T)$ is called a \emph{$1$-type over $\0$}.
  \item
    Likewise, we let $S_1^F(A) = \uf (\Def(F/A))$.
    An element $p \in S_1^F(A)$ is called a \emph{$1$-type over $A$}.
  \item
    A set $\Sigma(x)$ of $L(A)$-formulas with one variable, say, $x$,
    is called a \emph{partial $1$-type over $A$}.
    We write $\Sigma(F)$ for the set
    $\{w \in F \mid \zenbu{\phi \in \Sigma} F \models \phi(w)\}$.
  \end{enumerate}
\end{definition}

\begin{convention}
  We identify a 1-type $p$ over $A$ with the partial 1-type
  \[\{\phi(x;\bar a) \mid \phi(F;\bar a) \in p, \bar a \in A\}\] over $A$.
  In fact, this is closer to how types are usually defined
  in classical model theory
  and is what types are in \cite{2017arXiv170103773L}.
  Likewise, we write $[\phi]$ for the clopen set $[X]$
  in a Stone space of 1-types if $\phi$ defines $X$.
\end{convention}

Given a partial type $\Sigma(x)$,
the intersection $\bigcap_{\phi \in \Sigma} [\phi]$
is a closed set in the Stone space of 1-types.

\begin{definition}
  \hfill
  \begin{enumerate}[(i)]
  \item 
    A partial 1-type $\Sigma(x)$ over $A$ is \emph{deductively closed}
    if $[\phi] \supseteq  \bigcap_{\psi \in \Sigma} [\psi]
    \implies \phi \in \Sigma$.
  \item
    For a deductively closed partial 1-type $\Sigma(x)$,
    we write $\closed{\Sigma}$
    for the closed set
    \[\{p \mid p \supseteq \Sigma\} = \bigcap_{\phi \in \Sigma} [\phi].\]
  \end{enumerate}
\end{definition}

\begin{proposition}
  Let $w \in F$ and $A \subseteq F$.
  The family $\tp^F(w/A)$ of A-definable subsets of $F$ containing $w$
  is an ultrafilter in $\Def(F/A)$
  and thus a 1-type over $A$.\qed
\end{proposition}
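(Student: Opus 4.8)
The plan is to verify directly that $\tp^F(w/A)$ satisfies the defining conditions of an ultrafilter in the Boolean algebra $\Def(F/A)$, after which membership in the Stone space $S_1^F(A) = \uf(\Def(F/A))$ is immediate from the definition of that space. Recall that $\tp^F(w/A) = \{X \in \Def(F/A) \mid w \in X\}$, so every assertion about it reduces to an elementary statement about whether the fixed point $w$ lies in a given $A$-definable set. The key structural input is the fact, already noted above, that $\Def(F/A)$ is a Boolean subalgebra of $F^+$, and hence is closed under complements, finite intersections, and finite unions computed inside $\power(F)$.

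First I would check that $\tp^F(w/A)$ is a proper filter. The top element of $\Def(F/A)$ is $F$ itself, and since $w \in F$ by hypothesis we have $F \in \tp^F(w/A)$, so the family is nonempty; the bottom element is $\varnothing$, and as $w \notin \varnothing$ we get $\varnothing \notin \tp^F(w/A)$, so the filter is proper. Closure under finite meets is the observation that $w \in X$ and $w \in Y$ imply $w \in X \cap Y$, combined with closure of $\Def(F/A)$ under intersection; upward closure is the observation that $w \in X$ and $X \subseteq Y$ imply $w \in Y$ for any $Y \in \Def(F/A)$.

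It then remains to verify primeness, which upgrades the filter to an ultrafilter: for an arbitrary $X \in \Def(F/A)$, either $w \in X$ or $w \in F \setminus X$, and since $\Def(F/A)$ is closed under complementation we have $F \setminus X \in \Def(F/A)$ as well, so exactly one of $X$ and its complement lies in $\tp^F(w/A)$. This establishes that $\tp^F(w/A)$ is an ultrafilter, and therefore an element of $\uf(\Def(F/A)) = S_1^F(A)$, i.e. a $1$-type over $A$. There is no genuine obstacle here—the statement amounts to the familiar fact that evaluation at a point yields a (principal) ultrafilter on the algebra of definable sets—so the only care required is to invoke the Boolean closure properties of $\Def(F/A)$ at each step rather than to reason within the full powerset $\power(F)$.
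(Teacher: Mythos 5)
Your proof is correct, and it is exactly the routine verification the paper has in mind: the proposition is stated with a \qed and no written proof, precisely because it reduces to checking the ultrafilter axioms for evaluation at a point, using that $\Def(F/A)$ is a Boolean subalgebra of $F^+$. Your care in invoking the closure properties of $\Def(F/A)$ (rather than working in all of $\power(F)$) is the only substantive point, and you handle it properly.
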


\begin{definition}\label{def:types2}
  \hfill
  \begin{enumerate}[(i)]
  \item
    Let $A \subseteq F$.
    An element $w\in F$ \emph{realizes} $p \in S_1^F(A)$,
    or $w \models p$,
    if $\tp^F(w/A) = p$.
    The 1-type $p$ is \emph{realized in} $F$
    if there is $w \in F$ with $w \models p$.
  \item\label{item:saturation}
    The $L$-structure $F$ is \emph{$\aleph_0$-saturated}
    if for every finite $A \subseteq F$,
    every $p \in S_1^F(A)$ is realized in $F$.
  \end{enumerate}
\end{definition}

\subsection{Model theory specific to neighborhood frames}
In this section, we study the model theory of neighborhood frames
while we relate it to the classical model theory.
\begin{definition}
  Let $L$ be a language of coalgebraic predicate logic based on $L_0$ and
  $F$ an $L$-structure.
  The \emph{essential part} $F^\ess$ of $F$
  is the $L$-structure whose reduct to $L_0$ is the same as that of $F$
  and whose neighborhood function $N^\ess$ is defined by
  \[
    U \in N^\ess(w) \iff \text{$U$ is definable in $F$ and $U \in N^F(w)$}
  \]
  for $w \in F^\ess$.
\end{definition}

\begin{proposition}[\cite{Chang1973-CHAMMT}]\label{prop:essential}
  Let $L$ be a language of coalgebraic predicate logic
  and $F, G$ be $L$-structures.
  Suppose $F^\ess \cong G^\ess$.
  \begin{enumerate}[(i)]
  \item $F \equiv_L G$.
  \item If $F$ is $\aleph_0$-saturated, so is $G$.\qed
  \end{enumerate}
\end{proposition}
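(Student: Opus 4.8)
The plan is to exploit the fact that the coalgebraic predicate logic semantics only ever tests membership of \emph{definable} sets in the neighborhood function, so discarding the non-definable neighborhoods — which is exactly what passing from $F$ to $F^\ess$ does — cannot change the truth value of any formula. First I would establish the core lemma: for every $L(F)$-sentence $\phi$,
\[
  F \models \phi \iff F^\ess \models \phi,
\]
proved by induction on the complexity of $\phi$. Since $F$ and $F^\ess$ share the same underlying set and the same $L_0$-reduct, the atomic, Boolean, and quantifier cases are immediate; for the quantifier case note that substituting an element of $F$ for the quantified variable yields an $L(F)$-sentence of lower complexity, to which the induction hypothesis applies. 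The only case that uses anything is $\phi = w\mop_y\psi(y)$: here $F \models \phi$ iff $\psi(F) \in N^F(w)$, while $F^\ess \models \phi$ iff $\psi(F^\ess) \in N^\ess(w)$. By the induction hypothesis applied to each sentence $\psi(v)$ we get $\psi(F) = \psi(F^\ess)$; call this common set $U$. Crucially $U$ is definable in $F$ (it is defined by $\psi$), so by the very definition of $N^\ess$ we have $U \in N^\ess(w) \iff U \in N^F(w)$, closing the induction. This modal case is where the whole argument lives; everything else is bookkeeping, and I do not expect any genuine obstacle beyond setting it up cleanly.

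Two consequences of the lemma drive the rest. Restricting to sentences without parameters gives $F \equiv_L F^\ess$, and likewise $G \equiv_L G^\ess$. Applying the lemma to formulas with free variables (via substitution of parameters from $A$) yields $\phi(F) = \phi(F^\ess)$ for every $L(A)$-formula $\phi$, so $\Def(F/A)$ and $\Def(F^\ess/A)$ coincide as Boolean algebras of subsets; and since the monotone operation is computed by $\mord(\phi(F)) = (\mord\phi)(F) = (\mord\phi)(F^\ess)$, they coincide as BAMs. Hence $S_1^F(A) = \uf(\Def(F/A)) = \uf(\Def(F^\ess/A)) = S_1^{F^\ess}(A)$, and for each $w$ we have $\tp^F(w/A) = \tp^{F^\ess}(w/A)$. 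Consequently a $1$-type over a finite $A$ is realized in $F$ exactly when it is realized in $F^\ess$, so $F$ is $\aleph_0$-saturated if and only if $F^\ess$ is.

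It then remains only to transport information across the isomorphism $F^\ess \cong G^\ess$. I would record the routine fact that an isomorphism of $L$-structures preserves satisfaction of all $L$-sentences — a straightforward induction whose modal clause uses that such an isomorphism $h$ satisfies $h(\psi(F^\ess)) = \psi(G^\ess)$ together with $U \in N^\ess(w) \iff h(U) \in N^\ess(h(w))$ — so that $F^\ess \equiv_L G^\ess$; the same isomorphism carries definable sets, types, and hence $\aleph_0$-saturation from $F^\ess$ to $G^\ess$. Chaining these facts yields part (i) as $F \equiv_L F^\ess \equiv_L G^\ess \equiv_L G$, and part (ii) as: $F$ is $\aleph_0$-saturated $\iff F^\ess$ is $\iff G^\ess$ is $\iff G$ is. The only substantive ingredient throughout is the definability observation in the modal case of the core lemma.
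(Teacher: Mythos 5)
Your proposal is correct, and it is precisely the argument this proposition calls for: the entire content lives in the modal induction step, where $\psi(F)=\psi(F^\ess)$ is definable in $F$, so $N^\ess$ and $N^F$ agree on it, and everything else (isomorphism invariance, the identification $\Def(F/A)=\Def(F^\ess/A)$ and hence of types and realizations) is routine transport. Note that the paper supplies no proof of its own for this statement—it attributes the result to Chang and states it as proved—so your write-up fills in exactly the details being omitted, in what is evidently the intended way.
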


We define a class of languages of first-order logic,
one for each language of coalgebraic predicate logic.
\begin{definition}[\cite{lmcs:1167},{\cite[Definition 9]{TENCATE2009146}}]\label{defi:twosorted}
  Let $L$ be an arbitrary language of coalgebraic predicate logic
  and $L_0$ the language of first-order logic on which $L$ is based.
  We define the language $L^2$ to be the two-sorted first-order language
  whose sorts are the \emph{state sort} and \emph{neighborhood sort}
  and whose atomic formulas are those in $L_0$,
  recast as formulas in which constants and variables belong to the state sort,
  together with $xNU$ and $x \in U$,
  where $x$ and $U$ are variables
  for the state sort and the neighborhood sort, respectively.
  (In general, we will use lowercase variables for the state sort
  and uppercase variables for the neighborhood sort.)
\end{definition}

\begin{definition}
  Let $L$ be a language of coalgebraic predicate logic
  and $F$  an $L$-structure.
  Given a family $\mathcal S \subseteq \power (F)$
  that contains all definable subsets of $F$,
  we write $(F,\mathcal S)$ for the $L^2$-structure $G$.
  The domain of the state sort of $G$ is that of $F$,
  and the domain of the neighborhood sort of $G$ is $\mathcal S$.
  The $L^2$-structure $G$
  interprets all nonlogical symbols of $L^2$ but $N$ and  $\in$
  in the same way as $F$.
  Finally, we have $(w, U) \in N^{G} \iff U \in N^F(w)$
  and $(w, U) \in \mathord{\in^{G}} \iff w \in U$.
  A family $\mathcal S$ is \emph{large for $F$}
  if $U \in \mathcal S$ whenever there is $w \in F$ with $U \in N^F(w)$.
\end{definition}

\begin{proposition}[\cite{Chang1973-CHAMMT,2017arXiv170103773L}]\label{prop:translation}
  Let $L$ be a language of coalgebraic predicate logic.
  Let $(-)^2: L \to L^2$ be the translation
  that commutes with Boolean combinations
  and satisfies
  \begin{align*}
    (\aru x\phi)^2 &= \aru x (\phi^2) \\
    (x\mop_y\phi)^2 &= \aru U [\zenbu y (y \in U \leftrightarrow \phi^2(y))
                               \land x N U].
  \end{align*}
  Let $\mathcal S \subseteq \power(F)$ be a family that
  contains all definable subsets of $F$.
  Then for every $L$-formula $\phi$ and $\bar a \in F$ we have
  \[
    F \models \phi(\bar a) \iff (F, \mathcal S) \models \phi^2(\bar a).\qedhere
  \]
\end{proposition}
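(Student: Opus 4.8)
The plan is to prove the biconditional by induction on the structure of the $L$-formula $\phi$, establishing it simultaneously for all choices of parameters $\bar a \in F$, so that the variables bound by the modal operators may be instantiated as one descends through the recursion. Since the translation $(-)^2$ is defined by recursion on exactly this structure, and both satisfaction relations — that of $F$ and that of $(F, \mathcal S)$ — are governed by matching recursions, the inductive cases for the Boolean connectives are immediate: the translation commutes with them by definition, and the two satisfaction predicates treat them identically. The atomic case is equally routine: atomic $L_0$-formulas are recast verbatim as state-sort formulas of $L^2$, and by construction $(F, \mathcal S)$ interprets every nonlogical symbol other than $N$ and $\in$ exactly as $F$ does, so $F \models \phi(\bar a) \iff (F, \mathcal S) \models \phi^2(\bar a)$ holds at once.

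For existential quantification I would argue as follows. We have $(\aru x\phi)^2 = \aru x(\phi^2)$, and the existential quantifier in $(F, \mathcal S)$ ranges over the state sort, whose domain is identical to that of $F$. Hence $F \models \aru x \phi(x, \bar a)$ iff $F \models \phi(w, \bar a)$ for some $w \in F$, which by the induction hypothesis holds iff $(F, \mathcal S) \models \phi^2(w, \bar a)$ for some $w \in F$, \ie, iff $(F, \mathcal S) \models \aru x \phi^2(x, \bar a)$.

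The crux is the modal case $\phi = x\mop_y\psi$. Unwinding the definition of satisfaction in $F$, we have $F \models w\mop_y\psi(y) \iff \psi(F) \in N^F(w)$, where $\psi(F) = \{v \in F \mid F \models \psi(v)\}$, the remaining parameters being suppressed. On the other side, $(F, \mathcal S) \models \aru U [\zenbu y (y \in U \leftrightarrow \psi^2(y)) \land w N U]$ asserts the existence of some $U \in \mathcal S$ such that $v \in U \iff (F, \mathcal S) \models \psi^2(v)$ for every $v \in F$, together with $U \in N^F(w)$. By the induction hypothesis the defining condition on $U$ reads $v \in U \iff v \in \psi(F)$, forcing $U = \psi(F)$. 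Thus the translated formula holds at $(F, \mathcal S)$ precisely when both $\psi(F) \in \mathcal S$ and $\psi(F) \in N^F(w)$.

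The one point where the hypothesis on $\mathcal S$ does real work is the observation that $\psi(F)$ — more precisely $\psi(F; \bar a)$, the set defined by $\psi$ with its parameters instantiated — is a definable subset of $F$, and hence belongs to $\mathcal S$ by assumption. With this in hand the two conditions collapse to the single requirement $\psi(F) \in N^F(w)$, matching the $F$-side semantics, and the modal case closes. The main obstacle to watch is therefore not computational but conceptual: one must remember that the quantifier $\aru U$ of $L^2$ ranges only over $\mathcal S$, so the equivalence would break if $\mathcal S$ omitted some definable set; the stated hypothesis that $\mathcal S$ contains all definable subsets is exactly what forestalls this.
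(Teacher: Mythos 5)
Your proof is correct and is the canonical argument: the paper itself states Proposition~\ref{prop:translation} with a citation to Chang and to Litak et al.\ rather than proving it, and the proof in those sources is exactly your structural induction, with the only nontrivial step being the modal case, where (as you correctly isolate) the hypothesis that $\mathcal S$ contains all definable subsets of $F$ is needed to witness the existential quantifier $\aru U$ in the left-to-right direction. Nothing to add.
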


\begin{remark}\label{rem:hoge}
  Note that the same two-sorted language $L^2$ is considered in
  \cite{lmcs:1167} even though their transformation of neighborhood
  frames into $L^2$-structures there is different from ours.
  While in \cite{lmcs:1167} a neighborhood frame $F$ is
  always associated with the structure $M$ for $L^2$ whose
  neighborhood sort consists of those subsets of $F$ that are
  neighborhoods of some state of $F$, we do not impose such a
  restriction here.  In addition, there is a third language for
  neighborhood frames used before as a model correspondence
  language~\cite[Definition 12]{TENCATE2009146} for neighborhood and
  topological semantics of modal logic and for the study of model
  theory of topological spaces~%
  \cite{flum80:_topol_model_theor} in general.  This is also a
  fragment of the two-sorted language introduced above and, in fact,
  contains the image of the embedding of coalgebraic predicate logic
  into the two-sorted language \cite{ziegler85:_chapt_xv}.
\end{remark}

\begin{lemma}\label{lem:familyofsets}
  Let $L$ be a language of coalgebraic predicate logic and
  $F$  an $L$-structure.
  Let $G$ be an $L^2$-structure that is an elementary extension of
  $(F, \power(F))$.
  There exists an $L$-structure $G'$
  whose domain is that of the state sort of $G$
  and a family $\mathcal S \subseteq \power(G')$
  that satisfies the following:
  \begin{enumerate}[(i)]
  \item $\mathcal S$ contains all definable subsets in $G'$.
  \item $\mathcal S$ is large for $G'$.
  \item $G \cong (G', \mathcal S)$. \label{item:familyofsets}
  \end{enumerate}
\end{lemma}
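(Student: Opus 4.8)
The plan is to realize the (possibly nonstandard) neighborhood sort of $G$ as an honest family of subsets of the state sort, by sending each neighborhood-sort element $A$ to its extension $\widehat{A} = \{\, w \mid G \models w \in A \,\}$. I would take $G'$ to be the $L$-structure whose domain is the state sort of $G$, whose $L_0$-reduct is the $L_0$-reduct of the state sort of $G$, and whose neighborhood function is $N^{G'}(w) = \{\, \widehat{A} \mid G \models w N A \,\}$; and I set $\mathcal S = \{\, \widehat{A} \mid A \text{ in the neighborhood sort of } G \,\}$. The extensionality sentence $\zenbu U \zenbu V [\zenbu x (x \in U \leftrightarrow x \in V) \to U = V]$ holds in $(F, \power(F))$ and hence, by elementarity, in $G$; this guarantees that $A \mapsto \widehat{A}$ is injective, so that $h := \mathrm{id} \sqcup (A \mapsto \widehat{A})$ is a bijection of each sort onto the corresponding sort of $(G', \mathcal S)$.

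Before verifying the three clauses I would prove the translation equivalence
\[
 G' \models \phi(\bar a) \iff G \models \phi^2(\bar a)
\]
for every $L$-formula $\phi$ and every $\bar a$ from the state sort, by induction on $\phi$. Note that I cannot simply cite Proposition~\ref{prop:translation} here, since that statement presupposes clause~(i), which is one of the things I am trying to establish; the content of the proposition must instead be reproved by hand in this setting. The Boolean and existential cases are routine, as is the atomic case, where $\phi^2 = \phi$ and $G'$ shares its state-sort $L_0$-reduct with $G$. The one case requiring care is $\phi = w \mop_y \psi$: here $G' \models w \mop_y \psi(\bar a)$ means $\psi(G'; \bar a) \in N^{G'}(w)$, i.e.\ there is $A$ with $G \models w N A$ and $\widehat{A} = \psi(G'; \bar a)$; by the induction hypothesis $\psi(G'; \bar a) = \{\, y \mid G \models \psi^2(y, \bar a)\,\}$, so this is exactly the assertion that some $A$ with $G \models w N A$ satisfies $G \models \zenbu y (y \in A \leftrightarrow \psi^2(y, \bar a))$, which is the meaning of $G \models (w \mop_y \psi)^2(\bar a)$. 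This step uses only the definition of $N^{G'}$ and the induction hypothesis, not clause~(i).

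With the translation equivalence in hand the three clauses fall out. Clause~(ii) is immediate from the construction, since $N^{G'}(w) \subseteq \mathcal S$ for every $w$. For clause~(i), I use that the comprehension schema $\zenbu{\bar y}\, \aru U\, \zenbu x (x \in U \leftrightarrow \phi^2(x, \bar y))$ holds in $(F, \power(F))$ — trivially, as there the neighborhood sort is the full powerset, and $\phi^2$ has only state-sort free variables — and therefore in $G$; instantiating $\bar y$ at $\bar a$ yields $A$ with $\widehat{A} = \{\, x \mid G \models \phi^2(x, \bar a)\,\}$, which by the translation equivalence equals $\phi(G'; \bar a)$, so $\phi(G'; \bar a) = \widehat{A} \in \mathcal S$. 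For clause~(iii), $h$ is a bijection by the remarks above, it preserves the $L_0$-structure since $G'$ and $G$ share a state-sort $L_0$-reduct, it preserves $\in$ by the very definition of $\widehat{A}$, and it preserves $N$ because $\widehat{A} \in N^{G'}(w)$ holds iff some $B$ with $G \models w N B$ has $\widehat{B} = \widehat{A}$, which by injectivity (extensionality) means $G \models w N A$.

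The main obstacle I anticipate is the mild circularity just flagged: the natural instinct is to invoke Proposition~\ref{prop:translation} to move between $\phi$ and $\phi^2$, but that proposition is only available once clause~(i) is known, whereas clause~(i) is derived precisely from the translation equivalence together with comprehension. Isolating the translation equivalence as an independent induction — and checking that its modal case leans only on the definition of $N^{G'}$ — is what breaks the circle; after that, extensionality and the (trivial) comprehension schema in $(F, \power(F))$, transported across the elementary extension, do the remaining work.
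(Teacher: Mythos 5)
Your proposal is correct and takes essentially the same route as the paper's proof: the same extension map $A \mapsto \widehat{A}$, injectivity via the extensionality sentence transported along the elementary extension, the comprehension schema inherited from $(F,\power(F))$ to witness clause~(i), and the same verification of clauses~(ii) and~(iii). Your one refinement — isolating the translation equivalence $G' \models \phi(\bar a) \iff G \models \phi^2(\bar a)$ as a separate induction whose modal case uses only the definition of $N^{G'}$ (i.e.\ largeness of $\mathcal S$, which holds by construction), rather than citing Proposition~\ref{prop:translation}, whose hypothesis is exactly clause~(i) — is a legitimate and careful filling-in of a step the paper compresses into ``it can easily be seen that $i(U) = \phi(G', \bar a)$.''
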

\begin{proof}
  Note that $F$ satisfies extensionality:
  \[
    (F, \power(F))
    \models \zenbu U \zenbu V [\zenbu x (x \in U \leftrightarrow x \in V)
    \rightarrow U = V].
  \]
  By $L^2$-elementarity, so does $G$.
  Let $G'$, $S^G$ be the domains of the state sort and
  the neighborhood sort of $G$, respectively.
  Let $i : S^G \to \power(G')$ be defined by
  \[i(U) = \{w \in G' \mid G \models w \in U\}.\]
  By the extensionality of $G$, $i$ is injective.
  Let $\mathcal S$ be the range of $i$.
  Define the neighborhood function $N^{G'}$ by
  \[ i(U) \in N^{G'}(w) \iff G \models w N U.\]

  Let $\phi(x; \bar y)$ be an $L$-formula
  and $X := \phi(G', \bar a)$ be a definable set in $G'$,
  where $\bar a \in G'$.
  Note that the $L^2$-structure $(F, \power(F))$ satisfies comprehension:
  \[
    (F, \power(F)) \models
    \zenbu {\bar y} \aru U \zenbu x
    (\phi^2(x; \bar y) \leftrightarrow x \in U).
  \]
  So does $G$. Let $U$ witness the satisfaction by $G$
  of the existential formula
  $\aru U \zenbu x(\phi^2(x; \bar a) \leftrightarrow x \in U)$.
  It can easily be seen that $i(U) = \phi(G', \bar a)$.
  
  It is easy to see that $\mathcal S$ is large for $G'$
  and that $G \cong (G', \mathcal S)$.
\end{proof}

\begin{proposition}\label{prop:saturation}
  Let $L$ be a language of coalgebraic predicate logic
  and $F$ an $L$-structure.
  There exists an $L$-structure $G$
  such that $G \equiv_L F$ and that $G$ is $\aleph_0$-saturated.%
  \footnote{Our use of both coalgebraic predicate logic
  and first-order logic makes phrases
  such as ``elementarily equivalent''
  and ``$\aleph_0$-saturation''
  potentially ambiguous because
  we have two different classes of definitions,
  one from the previous subsection
  and the other standard in classical model theory.
  Note, however, that
  (expansions of) neighborhood frames are never structures
  of any language of first-order logic
  and that first-order structures are never $L'$-structures
  for any language $L'$ of
  coalgebraic predicate logic.
  Hence, for example, if $L'$ is a language of coalgebraic predicate logic,
  and $F$ is an $L'$-structure,
  then whenever we say that $F$ is $\aleph_0$-saturated,
  we mean what we stated in Definition~\ref{def:types2}(\ref{item:saturation}),
  with $L$ in the definition being $L'$. }
\end{proposition}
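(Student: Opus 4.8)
The plan is to transport the problem to the two-sorted first-order language $L^2$, invoke classical $\aleph_0$-saturation there, and pull the result back along Lemma~\ref{lem:familyofsets}. First I would regard $(F,\power(F))$ as an $L^2$-structure and appeal to the classical theorem that every first-order structure admits an $\aleph_0$-saturated elementary extension; since $L^2$ is an ordinary (two-sorted) first-order language, this applies verbatim. Let $H$ be such an extension, so that $(F,\power(F))$ is an elementary substructure of $H$ and $H$ is $\aleph_0$-saturated \emph{as a first-order structure}. Applying Lemma~\ref{lem:familyofsets} to $H$ then yields an $L$-structure $G$ together with a family $\mathcal S \subseteq \power(G)$ that contains all definable subsets of $G$, is large for $G$, and satisfies $H \cong (G,\mathcal S)$; moreover the domain of $G$ is exactly the state sort of $H$, so state-sort elements of $H$ are literally elements of $G$. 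I claim this $G$ is the structure required by the proposition.

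That $G \equiv_L F$ would follow by chasing the translation of Proposition~\ref{prop:translation}. For any $L$-sentence $\phi$, that proposition gives $F \models \phi \iff (F,\power(F)) \models \phi^2$ and $G \models \phi \iff (G,\mathcal S) \models \phi^2$, the latter being legitimate precisely because $\mathcal S$ contains all definable subsets of $G$. Since $(F,\power(F))$ is an elementary substructure of $H$ and $H \cong (G,\mathcal S)$, these two $L^2$-structures agree on the first-order sentence $\phi^2$, and the chain of equivalences collapses to $F \models \phi \iff G \models \phi$.

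The substantive step is transferring saturation from $H$ to $G$, and here the footnote's warning about the two competing notions of $\aleph_0$-saturation must be kept firmly in view. I would fix a finite $A \subseteq G$ and a $1$-type $p \in S_1^G(A)$, identified via the stated Convention with a maximal set of $L(A)$-formulas in one state variable $x$, and translate it to $p^2 := \{\phi^2 \mid \phi \in p\}$, a set of first-order $L^2(A)$-formulas in the state-sort variable $x$. Because $p$ is an ultrafilter in $\Def(G/A)$, any finite sub-family of $p$ has nonempty intersection in $G$, so the corresponding finite conjunction has a witness $w \in G$; Proposition~\ref{prop:translation} converts this into satisfaction of the translated conjunction in $(G,\mathcal S) \cong H$, showing that $p^2$ is finitely satisfiable in $H$ over $A$. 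As $H$ is $\aleph_0$-saturated and $A$ is finite, $p^2$ is realized by some state $w \in G$. Running Proposition~\ref{prop:translation} backwards yields $G \models \phi(w)$ for every $\phi \in p$, hence $p \subseteq \tp^G(w/A)$; since both are ultrafilters in $\Def(G/A)$ they coincide, and $p$ is realized in $G$.

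The main obstacle I anticipate is exactly this bookkeeping across the two meanings of saturation: one must confirm that a complete coalgebraic $1$-type over $A$ maps to a \emph{finitely satisfiable} partial first-order type in the correct (state) sort, that $\aleph_0$-saturation of $H$ realizes a partial type over a finite parameter set rather than merely a complete one, and that the realizing element—being a state of $H$ and hence an element of $G$—pulls back to a genuine realization of the original type via the faithful translation. The largeness of $\mathcal S$ plays no part in this argument; only the clause that $\mathcal S$ contains all definable subsets of $G$ is needed, so that Proposition~\ref{prop:translation} is applicable to $(G,\mathcal S)$.
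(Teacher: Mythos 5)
Your proposal is correct and follows essentially the same route as the paper's own proof: pass to the two-sorted structure $(F,\power(F))$, take a classically $\aleph_0$-saturated elementary extension, pull it back to an $L$-structure via Lemma~\ref{lem:familyofsets}, and transfer both elementary equivalence and saturation through the translation of Proposition~\ref{prop:translation}. Your write-up is in fact somewhat more explicit than the paper's (e.g., in verifying $G \equiv_L F$ and in checking that the realizing element lies in the state sort), but the decomposition and the key lemmas invoked are identical.
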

\begin{proof}
  Consider the $L^2$-structure $(F, \power(F))$,
  and take an elementary extension
  $G_0$ of $(F, \power(F))$
  that is $\aleph_0$-saturated.
  By Lemma~\ref{lem:familyofsets}(\ref{item:familyofsets}),
  take an $L$-structure $G$ and $\mathcal S \subseteq \power(G)$
  with $G_0 \cong (G, \mathcal S)$.
  Suppose that $A \subseteq G$ is finite.
  Let $p \in S_1^{G}(A)$ be arbitrary.
  Let $\Sigma^2$ be the partial type
  $\{ \phi^2 \mid \phi \in p  \} $
  over $A$ in $L^2$.
  Since $p$ is a proper filter in $\Def(F/A)$,
  the type $\Sigma^2$ is consistent by Proposition~\ref{prop:translation}.
  Thus, by the $\aleph_0$-saturation of $G_0$, we can take $w \in G_0$
  realizing $\Sigma^2$.
  By Proposition~\ref{prop:translation}, we have $w \models p$.
\end{proof}

We now introduce the notion of quasi-ultraproducts
as we will use it to give a proof 
of Fine's theorem at the end of this article.

\begin{definition}[\cite{Chang1973-CHAMMT,2017arXiv170103773L}]
  Let $L$ be a language of coalgebraic predicate logic based on $L_0$
  and $(F_i)_{i \in I}$ be a family of monotonic $L$-structures.
  Suppose that $D$ is an ultrafilter over $I$.
  Let $\prod_D F_i$ be the ultraproduct of $(F_i)_i$ as $L_0$-structures
  modulo $D$.
  A subset $A \subseteq \prod_D F_i$ is \emph{induced by}
  a family $(A_i)_{i \in J}$ 
  if $J \in D$, $A_i \subseteq F_i$ for $i \in J$,
  and
  \[
    a \in A \iff a(i) \in A_i \text{ for all $i \in J$.}
  \]
  A \emph{quasi-ultraproduct of $(F_i)_i$ modulo $D$}
  is a monotonic $L$-structure
  that is
  the $L_0$-structure $\prod_D F_i$ equipped with a neighborhood function
  $N$
  that satisfies
  \[
    A \in N(w) \iff A_i \in N^i(w(i))
    \text{ for all $i \in J$},
  \]
  whenever $w \in \prod_D F_i$, and $A$ is induced by $(A_i)_{i \in J}$.
  \sloppy
  A class $\mathcal K$ of monotonic neighborhood frames
  \emph{admits quasi-ultraproducts}
  if whenever $(F_i)_i$ is a family of neighborhood frames from $\mathcal K$,
  a quasi-ultraproduct of $(F_i)_i$ exists in $\mathcal K$.
\end{definition}
\fussy
\begin{proposition}[\cite{2017arXiv170103773L,Chang1973-CHAMMT}]\label{prop:qup}
  \hfill
  \begin{enumerate}
  \item
    Each class of the classes in Table~\ref{tab:defs}
    admits quasi-ultraproducts.
  \item
    Let $(F_i)_{i\in I}$ be a family of monotonic $L$-structures
    for a language $L$ of coalgebraic predicate logic.
    If $F_i$ satisfies a theory $T$ for all $i \in I$,
    so does a quasi-ultraproduct of $(F_i)_i$.
  \end{enumerate}
\end{proposition}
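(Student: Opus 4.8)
The plan is to treat the two parts separately, with a common technical engine: the behaviour of induced subsets of the $L_0$-ultraproduct $\prod_D F_i$ under Boolean operations. The two facts I would isolate first are that, for families $(A_i)_{i\in J}$ and $(C_i)_{i\in J}$, the set induced by $(A_i)$ is contained in the set induced by $(C_i)$ exactly when $A_i\subseteq C_i$ for $D$-almost all $i$, and that a finite intersection of induced sets is again induced, by the coordinatewise intersections. Both are proved by the usual choice-of-representative argument underlying {\L}o\'s's theorem. Throughout I read the defining clause of a quasi-ultraproduct as pinning down $A\in N(w)$ to hold iff $A_i\in N^i(w(i))$ for $D$-almost all $i$ whenever $A$ is induced by $(A_i)$; this is the only place where $N$ is constrained, and in particular $N$ is completely free on subsets that are not induced.

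For the first part I would construct, for each subclass $\mathcal K$ of Table~\ref{tab:defs}, a neighborhood function $N$ completing $\prod_D F_i$ to a member of $\mathcal K$. For the three finitary classes (monotonic, quasi-filter, filter) I let $N(w)$ be the closure, under supersets and the intersections required by $\mathcal K$, of the base family of those $B$ induced by some $(A_i)_{i\in J}$ with $A_i\in N^i(w(i))$ for all $i\in J$. Membership of the result in $\mathcal K$ holds by construction, and the forced clause on induced sets is verified using the two facts above together with the observation that each $N^i(w(i))$ enjoys the same closure (since $F_i\in\mathcal K$): if an induced $\prod_D C_i$ lies in $N(w)$, unwinding the finite witnesses, intersecting coordinatewise, and comparing $D$-almost everywhere yields $C_i\in N^i(w(i))$ $D$-almost everywhere, while the converse is immediate. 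For the two augmented classes I would instead use principal generators: writing $U_0^i$ for a generator of $N^i(w(i))$ when $w(i)$ is possible, I let $N(w)$ be the principal upset generated by the set induced by $(U_0^i)_i$ (and $N(w)=\varnothing$ in the augmented quasi-filter case when $w(i)$ is impossible for $D$-almost all $i$). The forced clause then reduces, via the inclusion fact, to $U_0^i\subseteq C_i$ holding $D$-almost everywhere.

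For the second part I would prove a {\L}o\'s theorem for coalgebraic predicate logic: writing $F$ for the quasi-ultraproduct, for every $L$-formula $\phi$ and $\bar a\in F$,
\[ F \models \phi(\bar a) \iff \{\, i\in I \mid F_i\models\phi(\bar a(i))\,\}\in D, \]
proved by induction on $\phi$. The atomic, Boolean, and existential cases are the classical {\L}o\'s argument in the $L_0$-reduct. In the modal case a formula $w\mop_y\psi(y,\bar a)$ holds in $F$ iff its truth set $\psi(F;\bar a)$ is a neighborhood of $w$; the induction hypothesis for $\psi$ identifies $\psi(F;\bar a)$ with the set induced by $(\psi(F_i;\bar a(i)))_i$, so that satisfaction never queries $N$ outside the induced sets, and the defining clause of the quasi-ultraproduct equates its membership in $N(w)$ with $\psi(F_i;\bar a(i))\in N^i(w(i))$ holding $D$-almost everywhere, which is the right-hand side. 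The displayed equivalence gives the second part at once, since $\phi\in T$ forces $\{\,i\mid F_i\models\phi\,\}=I\in D$.

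The main obstacle I anticipate is the verification of the forced clause for the augmented classes in the first part: infinite intersections do not commute with ultraproducts, so naively closing the base family under arbitrary intersections could produce an induced set violating the constraint. Passing to principal generators sidesteps this, but one must check carefully that the generated upset still agrees with the forced values on \emph{every} induced set and that the impossible-world case is handled correctly. The finitary classes and the inductive modal step are comparatively routine once the two facts about induced sets are in place.
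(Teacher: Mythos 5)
Your proof is correct, and on the two base classes (monotonic and quasi-filter) your construction coincides with the paper's: there, $N(w)$ is taken to be the superset-closure of the sets induced by families of coordinatewise neighborhoods, and your additional closure under finite intersections is in fact redundant, since by your coordinatewise-intersection fact that base is already intersection-closed whenever each $F_i$ is a quasi-filter frame. Where you genuinely diverge is in how the remaining three classes are covered. The paper does \emph{not} construct quasi-ultraproducts for the filter and augmented classes at all: it invokes Remark~\ref{rem:elementary}, by which those classes are CPL-elementary relative to the quasi-filter (resp.\ monotonic) class, and then uses part (2) of the proposition --- preservation of $L_=$-theories under quasi-ultraproducts --- to conclude that a quasi-ultraproduct formed in the base class automatically lies in the smaller class. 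You instead build the neighborhood function directly in each of the five classes, with the principal-generator construction $(U_0^i)_i$ handling the augmented cases. Both routes are sound. The paper's reduction is shorter and showcases the relative-elementarity machinery that drives the rest of the article, at the price of making part (1) depend on part (2); your argument is self-contained, keeps the two parts independent, and gives an explicit description of the quasi-ultraproduct neighborhoods in the augmented cases --- your concern that infinite intersections do not commute with ultraproducts is precisely the issue that the paper's reduction silently sidesteps, and the passage to principal generators resolves it correctly. Your part (2), the {\L}o\'s-style induction whose modal step rests on truth sets being induced sets (so that satisfaction never queries $N$ off the induced sets), is exactly the ``usual argument'' that the paper delegates to Litak et al.
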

\begin{proof}
  \hfill
  \begin{enumerate}
  \item
    By Remark~\ref{rem:elementary}, 
    it suffices to prove this for the class of monotonic neighborhood frames,
    the class of quasi-filter frames.
    This could be done by using the machinery introduced
    in Litak et al.~\cite{2017arXiv170103773L},
    but it is easy to prove it directly in the following  way.

    Let $\mathcal K_0$ be either the class of monotonic neighborhood frames
    or the class of quasi-filter neighborhood frames.
    Let $(F_i)_i$ be a family of neighborhood frames in $\mathcal K_0$.
    Let $N^i$ be the neighborhood function of $F_i$.
    Define the neighborhood function $N$ on $\prod_D F_i$ as follows:
    a subset $U \subseteq \prod_D F_i$ is in $N(w)$ if and only if
    there is a set $A \subseteq U$ induced by
    $(A_i)_{i \in J}$
    with $A_i \in N^i(w(i))$ for all $i \in J$.
    It is easy to see that this indeed defines a quasi-ultraproduct
    and that if each $F_i$ is in $\mathcal K_0$
    then so is the quasi-ultraproduct.

  \item
    The usual argument by induction works;
    see Litak et al.~\cite{2017arXiv170103773L}.\qedhere
  \end{enumerate}
\end{proof}

\section{Proof of the main lemmas}
\label{sec:canonicity}
In this section we prove the main lemmas of this article.
Recall that $L_=$ is the language of coalgebraic predicate logic
based on the empty language of first-order logic.

\begin{definition}\label{defi:CPL-elementary}
  Let $\mathcal K_0$ be a class of monotonic neighborhood frames.
  A class $\mathcal K$ of monotonic neighborhood frames is
  \emph{CPL-elementary relative to $\mathcal K_0$}
  if there is an $L_=$-theory $T$ with
  \[\mathcal K = \{ F \in \mathcal K_0 \mid F \models T \}.\]
  Two monotonic neighborhood frames $F$ and $F'$ are
  \emph{CPL-elementarily equivalent relative to $\mathcal K_0$}
  if $F, F' \in \mathcal K_0$ and $\Th_{L_=}(F) = \Th_{L_=}(F')$.
\end{definition}

\begin{remark}\label{rem:elementary}
  The class of filter frames is CPL-elementary
  relative to the class of quasi-filter frames
  (see Definition~\ref{defi:CPL-elementary}),
  and the class of augmented filter frames is CPL-elementary
  relative to the class of augmented quasi-filter frames;
  indeed, they are both defined by the same $L_=$-sentence 
  $\zenbu{x} x \mop_y y = y$.
  Furthermore,
  by the second paragraph of Example~\ref{exa:T},
  the class of augmented quasi-filter frames is CPL-elementary
  relative to the class of monotonic frames.
  Therefore,
  the main lemma in this section concerns the classes of monotonic
  and quasi-filter neighborhood frames, respectively,
  which suffice for the purpose of the main results
  (Theorems~\ref{thm:gtt} and \ref{thm:main}),
  which deal with any of the classes in Table~\ref{tab:defs}.
\end{remark}

\begin{lemma}\label{lem:dontwannamove}
  Let $F$ be a monotonic neighborhood frame,
  and let $G$ and $G'$ be $(L_=)^2$- and $L_=$- structures,
  respectively,
  obtained 
  by elementarily extending $F$ as in Lemma~\ref{lem:familyofsets}.
  \begin{enumerate}[(i)]
  \item \label{item:definablemonotonicity}
    If $F$ is monotonic, $X, Y \subseteq G'$ are definable, $X \subseteq Y$,
    and $X \in N^{G'}(w)$ for $w \in G'$, then $Y \in N^{G'}(w)$.
  \item \label{item:definableregularity}
    If $F$ is an augmented filter frame, then for every $w \in G'$
    either $N^{G'}(w)$ is empty
    or  has a minimum element.
  \end{enumerate}
\end{lemma}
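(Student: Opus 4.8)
The plan for both parts is the same elementarity-transfer argument: the relevant closure/structural property of a monotonic neighborhood frame is first-order expressible in the two-sorted language $L^2$, and $(F,\power(F))$ has the full powerset available as its neighborhood sort, so the property holds there and then transfers up. Write $i$ for the injection of the neighborhood sort of $G$ into $\power(G')$ from the proof of Lemma~\ref{lem:familyofsets}, so that $\mathcal S$ is the range of $i$, we have $i(U)\in N^{G'}(w)\iff G\models wNU$, and, by the extensionality of $G$ established there, $i(U)\subseteq i(V)\iff G\models\forall y(y\in U\to y\in V)$. The key point is that a subset relation between the genuine subsets $i(U),i(V)$ of $G'$ is faithfully recorded by the $L^2$-membership predicate, so set-theoretic statements about members of $\mathcal S$ become $L^2$-sentences that transfer from $(F,\power(F))$ to $G$ by elementarity and can then be read back off the isomorphism $G\cong(G',\mathcal S)$.

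For (i), consider the $L^2$-sentence $\forall x\forall U\forall V[(xNU\wedge\forall y(y\in U\to y\in V))\to xNV]$. Since $F$ is monotonic and the neighborhood sort of $(F,\power(F))$ is all of $\power(F)$, this holds in $(F,\power(F))$: it is exactly the statement that each $N^F(w)$ is closed under supersets. By elementarity it holds in $G$. Now $X$ and $Y$ are definable in $G'$, so by clause~(i) of Lemma~\ref{lem:familyofsets} they lie in $\mathcal S$, say $X=i(U)$ and $Y=i(V)$. The hypothesis $X\in N^{G'}(w)$ gives $G\models wNU$, and $X\subseteq Y$ gives $G\models\forall y(y\in U\to y\in V)$; the transferred monotonicity sentence then yields $G\models wNV$, i.e. $Y=i(V)\in N^{G'}(w)$.

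For (ii), the property is that each $N^F(w)$ is a principal upset, captured by $\forall x\exists U_0\forall V[xNV\leftrightarrow\forall y(y\in U_0\to y\in V)]$. This holds in $(F,\power(F))$: for each $w$ the generator $U_0=\bigcap N^F(w)$ is a genuine subset of $F$, hence a member of the full neighborhood sort $\power(F)$, witnessing the existential. By elementarity it holds in $G$; fixing $w\in G'$ and pulling back through $G\cong(G',\mathcal S)$ gives $U_0$ in the neighborhood sort with $G\models wNV\iff i(U_0)\subseteq i(V)$ for all $V$. Instantiating $V:=U_0$ shows $G\models wNU_0$, so $i(U_0)\in N^{G'}(w)$, while for any other $i(V)\in N^{G'}(w)$ we have $i(U_0)\subseteq i(V)$. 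Thus $i(U_0)$ is the $\subseteq$-least element of $N^{G'}(w)$, which is in particular nonempty, so the stated disjunction holds (in fact the empty alternative never occurs for augmented filter frames).

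I expect the only delicate point, rather than a genuine obstacle, to be the bookkeeping that turns semantic subset relations among elements of $\mathcal S$ into $L^2$-formulas and back, resting on the injectivity of $i$ via extensionality. The essential hypotheses are used sparingly but crucially: definability of $X,Y$ in (i) is precisely what places them in $\mathcal S$ and hence in the range of $i$, and in (ii) the full powerset of $F$ is what guarantees that the principal generator $U_0$ survives as a witness for the existential quantifier over the neighborhood sort. No saturation or deeper model theory is needed; everything follows from elementary equivalence together with the representation of $G$ as $(G',\mathcal S)$ in Lemma~\ref{lem:familyofsets}.
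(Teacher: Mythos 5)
Your proof is correct, and it runs on the same engine as the paper's: a property of $F$ is expressed so that it holds in the two-sorted structure $(F,\power(F))$, transferred to $G$ by elementary equivalence, and read back through the isomorphism $G\cong(G',\mathcal S)$ of Lemma~\ref{lem:familyofsets}. The routes differ in part (i). The paper never quantifies over the neighborhood sort there: it fixes CPL formulas $\phi(x;\bar y)$, $\psi(x;\bar z)$ defining $X$ and $Y$, expresses monotonicity for their instances as a CPL sentence universally quantified over the parameter variables, and pushes that sentence through the translation $(-)^2$ of Proposition~\ref{prop:translation} in both directions. You instead use the single $L^2$-sentence $\forall x\forall U\forall V[(xNU\wedge\forall y(y\in U\to y\in V))\to xNV]$, invoking definability of $X,Y$ only to place them in $\mathcal S$; this bypasses Proposition~\ref{prop:translation} entirely, gives the slightly stronger conclusion for all pairs in $\mathcal S$ rather than only definable ones, and makes parts (i) and (ii) instances of one uniform argument. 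What the paper's formulation buys is that the transferred statement is literally a CPL fact about $G'$, the form in which the lemma is later applied; what yours buys is economy and uniformity. In part (ii) your write-up is in fact more careful than the paper's: the sentence displayed in the paper, $\forall x[\neg\exists U\,xNU\vee\exists U_0\forall U(xNU\to U_0\subseteq U)]$, only yields a lower bound for $N^{G'}(w)$ lying in $\mathcal S$, whereas the stated conclusion needs that bound to be a member of $N^{G'}(w)$; your biconditional $\forall V[xNV\leftrightarrow\forall y(y\in U_0\to y\in V)]$, instantiated at $V:=U_0$, delivers exactly that, and also shows the ``empty'' disjunct never actually occurs for augmented filter frames.
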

\begin{proof}
  For (\ref{item:definablemonotonicity}),
  let $L(G')$-formulas $\phi(x; \bar a)$ and $\psi(x; \bar b)$
  define $X$ and $Y$,
  respectively.
  Since $F$ is monotonic, we have
  \begin{align}
    \label{eq:thedisplayedformula}
    (F, \power(F)) \models
    \zenbu{\bar y} \forall{\bar z} \zenbu{v}
    [&\zenbu x (\phi(x; \bar y) \rightarrow \psi(x; \bar z)) \notag\\
     &\land v\mop_x\phi(x; \bar y)
     \rightarrow v\mop_x\psi(x; \bar z)].
  \end{align}
  Since $(F,\power(F))$ e satisfies the $(-)^2$-translation
  of the right-hand side of the  displayed formula~(\ref{eq:thedisplayedformula})
  by Proposition~\ref{prop:translation},
  so does $G$.
  Again by Proposition~\ref{prop:translation},
  \[
    G' \models
    \zenbu x (\phi(x; \bar a) \rightarrow \psi(x; \bar b))
     \land w\mop_x\phi(x; \bar a)
     \rightarrow w\mop_x\psi(x; \bar b).
  \]
  Since $X \in N^{G'}(w)$, we have $\psi(G', \bar b) \in N^{G'}(w)$.

  For (\ref{item:definableregularity}),
  first observe that the $L^2$-structure $(F, \power(F))$ satisfies the sentence
  \[
    \zenbu{x}[\neg\aru{U} xNU \vee
    \aru{U_0}\zenbu{U}(xNU \rightarrow U_0 \subseteq U)],
  \]
  where $\subseteq$ is an abbreviation of the obvious $L^2$-formula.
  Since $G'$ satisfies the same $L^2$-formula,
  the claim follows.
\end{proof}

We are now ready to prove the key lemmas used in the proof of our main result.
Our lemmas are analogous to \cite[8.9 Theorem]{vanBenthem1983-VANMLA}.

\begin{lemma}\label{lem:main}
  Let $F$ be a monotonic neighborhood frame.
  There exists $G \equiv_{L_=} F$
  such that there is a surjective bounded morphism
  $f : G \twoheadrightarrow \ue F$.
  Moreover,
  if $\mathcal K_0$ is either
  the class of monotonic neighborhood frames or
  the class of quasi-filter neighborhood frames,
  and
  $F \in \mathcal K_0$, then we can take $G \in \mathcal K_0$.
\end{lemma}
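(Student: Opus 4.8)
The plan is to build $G$ as an $\aleph_0$-saturated CPL-elementary extension of $F$ and to exhibit the required surjection onto the ultrafilter extension $\ue F = \uf(F^+)$ by sending a state to its type. First I would invoke Proposition~\ref{prop:saturation} to obtain $G \equiv_{L_=} F$ that is $\aleph_0$-saturated; by Proposition~\ref{prop:translation} and Lemma~\ref{lem:familyofsets}, this $G$ arises as the state sort of an $(L_=)^2$-elementary extension of $(F,\power(F))$, so Lemma~\ref{lem:dontwannamove} applies and definable subsets of $G$ behave monotonically. Since $G \equiv_{L_=} F$, the Boolean algebras $\Def(G/\0)$ and $\Def(F/\0)$ are isomorphic as BAMs, and hence $\uf(\Def(G/\0)) = \uf(\Def(F/\0)) = \ue F$ (recalling $\Def(F/\0)$ is the relevant subalgebra of $F^+$). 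The map I would use is
\[
  f : G \to \ue F, \qquad f(w) = \tp^G(w/\0),
\]
the type of $w$ over the empty set, which is an ultrafilter in $\Def(G/\0)$ and thus a point of $\ue F$.

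The surjectivity of $f$ is where $\aleph_0$-saturation does its work: every $1$-type $p \in S_1(T)$ over $\0$ is realized in the $\aleph_0$-saturated structure $G$, and a realizing $w$ satisfies $f(w) = p$ by definition. So the content is in verifying that $f$ is a bounded morphism, i.e.\ the ``forth'' and ``back'' conditions relating $N^G$ to the neighborhood function $N^\sigma$ of $\uf(\Def(G/\0))$. Here I would unwind the definition~(\ref{eq:ultraframe}) of $N^\sigma$ together with the definition of $\mord$ on $\Def(G/\0)$: a neighborhood of $\tp^G(w/\0)$ in $\ue F$ is controlled by which $\mord(\phi(G))$ lie in the type, i.e.\ by which formulas $\phi$ satisfy $w \in (\mord\phi)(G)$, which unpacks as $\phi(G) \in N^G(w)$. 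To check ``forth'' I would take $U' \in \power(\ue F)$ with $f^{-1}(U') \in N^G(w)$ and, using the fact that $f^{-1}([\phi]) = \phi(G)$ for definable $\phi(G)$, locate a closed $K \subseteq U'$ witnessing $U' \in N^\sigma(f(w))$; the monotonicity clause of Lemma~\ref{lem:dontwannamove}(\ref{item:definablemonotonicity}) is exactly what lets me pass from a definable set inside $f^{-1}(U')$ being a neighborhood to the conclusion. The ``back'' direction runs symmetrically, reading off from $U' \in N^\sigma(f(w))$ a closed $K$, hence a family of definable $\phi(G) \ni$-witnesses whose preimages sit inside $f^{-1}(U')$, and again invoking definable monotonicity.

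For the ``moreover'' clause I would check that $G$ can be chosen inside $\mathcal K_0$. When $\mathcal K_0$ is the class of monotonic frames there is nothing to do, since $G$ is monotonic by construction. When $\mathcal K_0$ is the class of quasi-filter frames, the quasi-filter condition is CPL-elementary (it is expressed by an $L_=$-theory asserting closure under finite nonempty intersections of neighborhoods), so it is preserved under CPL-elementary equivalence; alternatively, because the construction of $G$ factors through the two-sorted elementary extension and $\aleph_0$-saturation, one checks directly that the closure properties defining a quasi-filter transfer from $(F,\power(F))$ to $G$ via Proposition~\ref{prop:translation}. By Remark~\ref{rem:elementary} these two cases suffice.

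The main obstacle I anticipate is the careful bookkeeping in the bounded-morphism verification, specifically matching the closed-element quantifier in~(\ref{eq:ultraframe}) against the definable neighborhoods available in $G$. The definition of $N^\sigma$ quantifies over \emph{closed} elements $K$ of $(\Def(G/\0))^\sigma$, which are intersections of clopens $[\phi]$, whereas $N^G(w)$ sees only honestly definable sets $\phi(G)$; reconciling these — producing, from a neighborhood of a type, a single definable set that is a genuine $N^G$-neighborhood and whose clopen lies below the target — is precisely where definable monotonicity (Lemma~\ref{lem:dontwannamove}(\ref{item:definablemonotonicity})) must be deployed, and getting the direction of the inclusions right in both ``forth'' and ``back'' is the delicate point.
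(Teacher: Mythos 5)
There are two genuine gaps here, one repairable and one fatal to the argument as you have set it up. The repairable one: you work throughout in $L_=$ and identify $\uf(\Def(G/\0)) = \uf(\Def(F/\0))$ with $\ue F$. But $\ue F = \uf(F^+)$ is built from the \emph{full} powerset algebra, whereas $\Def_{L_=}(F/\0)$ is only the algebra of $L_=$-definable subsets of $F$, in general a much smaller subalgebra of $F^+$; its ultrafilter space is not $\ue F$, so your map $f$ does not even land in the right frame. The paper's proof therefore begins by expanding the language to $L$ based on $\{P_S \mid S \subseteq F\}$, one unary predicate per subset of $F$, so that every subset becomes $\0$-definable, $\Def(F/\0) \cong F^+$, and $S_1(T) \cong \ue F$; the statement $G \equiv_{L_=} F$ is then recovered by taking reducts. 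Your proposal omits this expansion entirely.

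The fatal gap is the ``back'' condition, which you assert ``runs symmetrically'' but which is exactly the point where a plain $\aleph_0$-saturated elementary extension is not enough. If $U' \in N^\sigma(f(w))$, the witness in~(\ref{eq:ultraframe}) is a \emph{closed} element $K \subseteq U'$, i.e.\ an infinite intersection $\bigcap_{\phi \in \Sigma}[\phi]$. What~(\ref{eq:ultraframe}) gives you is that every definable set $\phi(G)$ with $[\phi] \supseteq K$ lies in $N^G(w)$; what you need is a single $U \in N^G(w)$ with $f(U) \subseteq U'$. No single such $\phi(G)$ works, because $[\phi] \supseteq K$ does not put $[\phi]$ inside $U'$; the natural candidate is $\Sigma(G) = \bigcap_{\phi\in\Sigma}\phi(G)$, and there is no reason this infinite intersection of neighborhoods is itself a neighborhood of $w$ — monotonic (and even quasi-filter) frames are not closed under infinite intersections, Lemma~\ref{lem:dontwannamove}(\ref{item:definablemonotonicity}) only moves \emph{upward} to definable supersets, and the $\aleph_0$-saturation of Definition~\ref{def:types2} realizes state-sort types only, giving you no control over which \emph{sets} are neighborhoods. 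This is precisely why the paper's proof does not stop at the saturated extension: it passes to the essential part $G_1$, then forms $G_2$ by adjoining, as new neighborhoods of each $w$, all sets $\Sigma(G_1)$ for deductively closed partial types $\Sigma$ over finite sets that are ``good at $w$'', and then closes under supersets; the bulk of that proof (the compactness argument producing a finite $\Sigma_0 \subseteq \Sigma$ with $E_{\Sigma_0} \subseteq [X]$) is devoted to showing that this ad hoc enlargement of the neighborhood function does not change the $L$-theory. Your proposal has no counterpart to this step. Finally, your ``moreover'' clause leans on the claim that the quasi-filter condition is CPL-elementary; this is unsupported (the condition quantifies over arbitrary neighborhoods, not definable ones), and Remark~\ref{rem:elementary} conspicuously does not claim it — which is exactly why the paper treats the quasi-filter case by a direct verification on the enlarged $G$, using that the union of two good deductively closed types remains deductively closed when $F$ is a quasi-filter frame.
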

The following is the outline of the proof, which comes after this paragraph.
We follow the classical proof of \cite[8.9 Theorem]{vanBenthem1983-VANMLA}
by taking an expansion $L$ of the correspondence language
so that every subset of the given frame $F$ will be definable
and taking an $\aleph_0$-saturated extension $G$ in that language.
However, we need to add more neighborhoods
to the neighborhood frame $G$ that is being constructed
to make sure that the map from $G$ to the ultrafilter frame of $F$
is a bounded morphism.
Much of the proof is dedicated
to showing that this construction preserves elementary equivalence in $L$.
\begin{proof}
  Let $L$ be the language of coalgebraic predicate logic
  based on $\{P_S \mid S \subseteq F\}$,
  the unary predicates for the subsets of $F$.
  The neighborhood frame $F$ can be made into an $L$-structure naturally.
  Let $G_0 \equiv_L F$ be an $\aleph_0$-saturated $L$-structure
  as obtained by Proposition~\ref{prop:saturation}.
  Let $G_1$ be the essential part of $G_0$.
  Let $G_2$ be the $L$-structure obtained from $G_1$ as follows:
  for each state $w \in G_1$,
  add as  a neighborhood of $w$
  the set $\Sigma(G_1)$,
  where $\Sigma(x)$ is a partial type over a finite set $A \subseteq G_1$
  such that $\Sigma(x)$ is deductively closed
  and that for every $\phi \in \Sigma$
  we have $\phi(G_1) \in N^{G_1}(w)$.
  We call such a partial type \emph{good at $w$}.
  Let $G$ be the $L$-structure identical to $G_2$
  except that its neighborhood function $N^G$ is defined by
  $U \in N^G(w) \iff \aru{U_0 \subseteq U} U_0 \in N^{G_2}(w)$.

  Note that a singleton partial type $\Sigma = \{\phi\}$ with $\phi(x) \in L(A)$
  is always good at $w \in G_1$
  if $\phi(G_1) \in N^{G_1}(w)$.
  
  We show that $G \equiv_L F$.
  By Proposition~\ref{prop:essential},
  we have $G_1 \equiv_L G_0 \equiv_L F$,
  so
  it suffices to see that for every definable $X \subseteq G$
  we have $X \in N^G(w) \iff X \in N^{G_1}(w)$.
  We show $\implies$ (the other direction is easy).
  By construction,
  there is either a definable set $Y \subseteq X$ with $Y \in N^{G_1}(w)$
  or a partial type $\Sigma(x)$ over a finite set $A$
  good at $w$ with $\Sigma(G_1) \subseteq X$.
  The former is a special case of the latter, so we assume the latter.
  Let $A'$ be a finite set containing $A$ and the parameters used in the definition of $X$. 
  Let $f' : G_1 \twoheadrightarrow S_1^{G_1}(A')$ be defined by
  $f'(w) = \tp^{G_1}(w/A')$.
  By $\aleph_0$-saturation,
  $f'$ is a surjection.
  We show that $f'(\Sigma(G_1)) = \closed{\Sigma} \subseteq S_1^{G_1}(A')$.
  It is easy to show that $f'(\Sigma(G_1)) \subseteq \closed{\Sigma}$;
  we show $f'(\Sigma(G_1)) \supseteq \closed{\Sigma}$.
  Let $p \in \closed{\Sigma}$ be arbitrary.
  By $\aleph_0$-saturation, take $w \in G_1$ with $f'(w) = p$.
  Since $p \supseteq \Sigma$, $w \in \Sigma(G_1)$.
  We have shown that $f'(\Sigma(G_1)) = E_\Sigma$.
  That $f'(X) = [X]$  easily follows from the $\aleph_0$-saturation of $G$ as well.
  We have $\closed{\Sigma} \subseteq [X]$.
  By the compactness of $S_1^{G_1}(A')$,
  we have a finite $\Sigma_0 \subseteq \Sigma$
  for which $\closed{\Sigma_0} \subseteq [X]$.
  Being the intersection of finitely many clopen sets,
  \[\closed{\Sigma_0} = \bigcap_{\phi \in \Sigma_0}[\phi]
    = \left[ \bigwedge \Sigma_0 \right] \]
  is clopen.
  Since $\Sigma$ is good at $w$, we have
  $(\bigwedge \Sigma_0) (G_1) \in N^{G_1}(w)$.
  We conclude that $X \in N^{G_1}(w)$
  by Lemma~\ref{lem:familyofsets}~(\ref{item:definablemonotonicity}).
  (See Remark~\ref{rem:excuse} for an alternate proof of this fact.)

  Since $F^+ \cong \Def(F/\0)$, we have $\ue F \cong S_1(T)$,
  where $T$ is the full $L$-theory of $F$,
  which is identical to $\Th_L(G)$.
  We show $f : G \twoheadrightarrow S_1(T)$ defined
  by $f(w) = \tp^G(w/\0)$,
  which is surjective by $\aleph_0$-saturation,
  is a bounded morphism.
  In the rest of the proof, we write $N^\sigma$
  for the neighborhood function of $S_1(T)$.

  \paragraph*{The ``forth'' condition.}
  Suppose that $U \in N^G(w)$.
  We show that $f(U) \in N^\sigma(\tp^G(w))$.
  By construction,
  we have either (I) $U\supseteq \phi(G, \bar a) \in N^G(w)$
  or (II) $U \supseteq \Sigma(G) \in N^G(w)$,
  where $\phi(x,\bar y)$ is an $L$-formula,
  $\bar a \in G$,
  and $\Sigma(x)$ is a partial type over a finite set $A$
  good at $w$.
  Since (I) is a special case of (II), we will just show (II).


  For (II),
  assume that $U \supseteq \Sigma(G) \in N^G(w)$,
  where $\Sigma$ is a partial 1-type over finite $A$ good at $w$.
  Let $K = r(\closed{\Sigma})$,
  where $r : S_1^G(A) \to S_1(T)$ is the closed continuous map
  dual to the embedding $\Def(G/\0) \hookrightarrow \Def(G/A)$.
  Note that $r(q) = q \cap \Def(G/\0)$ for $q \in S_1^G(A)$.
  Being the image of a closed map of a closed set, $K$ is closed.
  Recall the equation~(\ref{eq:ultraframe}) that defines $N^\sigma$
  to see that
  it suffices to show (i) that for every $\chi(x) \in L$
  we have $[\chi] \supseteq K \implies \chi(G) \in N^G(w)$
  and (ii) that $K \subseteq f(U)$.
  For (i), assume that $[\chi] \supseteq r(\closed{\Sigma})$,
  where $\chi(x) \in L$, and $[\chi]$ denotes a subset in $S_1(T)$.
  Take an arbitrary $q \in \closed\Sigma$.
  Then $r(q) \in r(\closed\Sigma) \subseteq [\chi]$,
  so $\chi \in r(q) \subseteq q$.
  We have just shown that $[\chi] \supseteq \closed{\Sigma}$,
  where $[\chi]$ denotes a subset in $S_1^G(A)$.
  By deductive closure $\chi \in \Sigma$.
  By construction, $\chi(G) \in N^G(w)$.
  For (ii),
  it suffices to show that arbitrary $q \in \closed{\Sigma}$ can be realized
  by an element of $U$.
  Since $q$ is a type over a finite set, by $\aleph_0$-saturation, we may take $v \models q$;
  this means $v \models \Sigma$, i.e., $v \in \Sigma(G) \subseteq U$.

  \paragraph*{The ``back'' condition.}
  Suppose that $U' \subseteq S_1(T)$ is in $N^\sigma(\tp^G(w/\0))$.
  We show that there is $U \subseteq G$ in $N^G(w)$
  such that $f(U) \subseteq U'$.
  By the definition of $N^\sigma$,
  there is a partial type $\Sigma(x)$ over  $\0$ good at $w$
  such that $\closed{\Sigma} \subseteq U'$.
  By construction, $\Sigma(G) \in N^G(w)$.
  Let $U: = \Sigma(G)$.
  Then for every $v \in U$, the type $\tp^G(v/\0)$ extends $\Sigma$
  and thus is in $\closed{\Sigma} \subseteq U'$.

  \paragraph*{Closure in relatively CPL-elementary classes.}
  By construction, $G$ is monotonic.

  Suppose that $F$ is a quasi-filter neighborhood frame.
  Let $w \in G$ and $U, U' \in N^G(w)$ be arbitrary.
  By construction, there are deductively closed partial types
  $\Sigma(x), \Sigma'(x)$
  over a finite set of parameters both of which are good at $w$
  with $\Sigma(G) \subseteq U$ and $\Sigma'(G) \subseteq U'$.
  The partial type $\Sigma \cup \Sigma'$ is also over a finite set,
  good at $w$.
  Moreover, $\Sigma \cup \Sigma'$ is deductively closed
  since $F$ is a quasi-filter frame.
  Therefore, we have
  $(\Sigma \cup \Sigma')(G) = \Sigma(G) \cap \Sigma(G) \subseteq U \cap U'$,
  so $U \cap U' \in N^G(w)$.
  We have seen that $G$ is a quasi-filter neighborhood frame.
\end{proof}

\begin{remark}\label{rem:excuse}
  In the proof above,
  we obtain $G$ not only by compactness
  but also by altering the neighborhoods in an ad-hoc way
  while maintaining elementary equivalence in $L_=$.
  There is no reason for us to believe that
  $G$ has the same theory
  as $F$ in ${L_=}^2$ or in the languages described in Remark~\ref{rem:hoge}.
  This is why we find it difficult to extend our main result
  to the more expressive languages.

  The following is the alternate proof that I announced
  at the end of the third paragraph of the proof
  (the concepts that we have not defined have obvious definitions):
  Suppose $X$ is definable by $\psi(x;A')$ where $\psi \in L$
  and $A' \subseteq G$ is a finite set.
  By $\aleph_0$-saturation of $G_1$,
  we have $\Th_{L(A')} (G_1) \cup \Sigma(x) \models \psi(x, A')$
  (otherwise, realize the type $\Sigma(x) \cup \{\neg\psi(x, A')\}$
  by some element in $G_1$, which would be in $\Sigma(G_1) \setminus X$.)
  By compactness, there is finite $\Sigma_0 \subseteq \Sigma$
  such that $\Sigma_0(G_1) \subseteq \psi(G_1, A')$.
  Since $\bigwedge \Sigma_0(x)$ is a single formula of $L$,
  by deductive closure $\bigwedge \Sigma_0(x) \in \Sigma(x)$.
  Hence $\bigwedge \Sigma_0(G_1) \in N^{G_1}(w)$.
  By Lemma~\ref{lem:familyofsets}(\ref{item:definablemonotonicity}),
  we have $X = \psi(G_1, A') \in N^{G_1}(w)$ as desired.
\end{remark}

\section{Applications of the main lemmas}\label{sec:appl-main-lemm}
\subsection{The Goldblatt-Thomason Theorem}
An algebraic argument essentially the same as the classical counterpart
can be used to show that a class of monotonic neighborhood frames 
closed under ultrafilter extensions is modally definable
if and only if it is closed under bounded morphic images,
generated subframes, and disjoint unions, 
and it reflects ultrafilter extensions
\cite{kurz2007goldblatt}\cite[Theorem 7.23]{hansen03:_monot_modal_logic}.
By applying Lemma~\ref{lem:main},
we obtain the following theorem.
\begin{theorem}
  \label{thm:gtt}
  Let $\mathcal K$ be a class of monotonic neighborhood frames
  that is closed under CPL-elementary equivalence
  relative to any of the classes in Table~\ref{tab:defs}.
  $\mathcal K$ is modally definable
  if and only if it is closed under bounded morphic images,
  generated subframes, and disjoint unions, 
  and it reflects ultrafilter extensions.
\end{theorem}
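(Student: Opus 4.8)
The plan is to take $\Lambda$ to be the set of monotonic modal formulas valid throughout $\mathcal{K}$ and to prove that $\mathcal{K}$ is exactly the class of monotonic neighborhood frames validating $\Lambda$. The forward direction I expect to be routine: assuming $\mathcal{K}$ is defined by a set $\Lambda$ of modal formulas, each closure property follows from a standard preservation fact for monotonic neighborhood semantics (cf. \cite{hansen03:_monot_modal_logic}). A modal formula is valid on a disjoint union iff it is valid on every summand; validity transfers along surjective bounded morphisms and to generated subframes; and, since $F^+$ sits as a subalgebra inside $(F^+)^\sigma = (\ue F)^+$, any formula valid on $\ue F$ is valid on $F$, giving reflection of ultrafilter extensions.

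For the converse I would first reduce the ambient class $\mathcal{K}_0$ to either the monotonic or the quasi-filter frames. By Remark~\ref{rem:elementary} each class in Table~\ref{tab:defs} is CPL-elementary relative to one of these two, and relative CPL-elementarity composes; hence closure under CPL-elementary equivalence relative to any Table~\ref{tab:defs} class entails closure relative to monotonic or quasi-filter frames, with $\mathcal{K} \subseteq \mathcal{K}_0$. Fixing a monotonic $F$ with $F \models \Lambda$, I would pass to complex algebras: $F^+$ satisfies the equations coding $\Lambda$, and these are precisely the equations valid on $\{G^+ : G \in \mathcal{K}\}$ (every modal-algebra equation being equivalent to one of the shape $t \approx \top$). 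Birkhoff's theorem (applicable since BAMs form a variety) then places $F^+$ in $\mathrm{HSP}(\{G^+ : G \in \mathcal{K}\})$, so there are frames $(G_i)_i$ in $\mathcal{K}$, a subalgebra $A \le \prod_i G_i^+$, and a surjection $h : A \twoheadrightarrow F^+$. Setting $H = \bigsqcup_i G_i \in \mathcal{K}$ (disjoint unions) and using $\prod_i G_i^+ \cong H^+$, I may regard $A$ as a subalgebra of $H^+$.

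The crux is to upgrade $H \in \mathcal{K}$ to $\ue H \in \mathcal{K}$, and this is exactly where Lemma~\ref{lem:main} and the relativized CPL-elementarity are used, replacing the classical appeal to elementarity and ultrapowers. Since $H \in \mathcal{K} \subseteq \mathcal{K}_0$, Lemma~\ref{lem:main} furnishes $H^\star \equiv_{L_=} H$ with $H^\star \in \mathcal{K}_0$ and a surjective bounded morphism $H^\star \twoheadrightarrow \ue H$; closure under CPL-elementary equivalence relative to $\mathcal{K}_0$ gives $H^\star \in \mathcal{K}$, and closure under bounded morphic images gives $\ue H \in \mathcal{K}$. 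Dualizing along the contravariant functor $\uf(-)$, the embedding $A \hookrightarrow H^+$ becomes a surjective bounded morphism $\ue H = \uf(H^+) \twoheadrightarrow \uf(A)$, whence $\uf(A) \in \mathcal{K}$, while the surjection $h : A \twoheadrightarrow F^+$ makes $\ue F = \uf(F^+) \hookrightarrow \uf(A)$ a generated subframe, whence $\ue F \in \mathcal{K}$. Since $\mathcal{K}$ reflects ultrafilter extensions, $F \in \mathcal{K}$, closing the converse.

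The hard part, beyond invoking Lemma~\ref{lem:main}, will be the duality bookkeeping for BAMs and monotonic neighborhood frames: checking that under $\uf(-)$ an embedding of BAMs dualizes to a surjective bounded morphism and a surjection to a generated-subframe inclusion, together with the identification $\prod_i G_i^+ \cong (\bigsqcup_i G_i)^+$. These are the monotonic analogues of Jónsson--Tarski duality and I expect them to be available from the algebraic apparatus of \S~\ref{sec:preliminaries} (cf. \cite{hansen03:_monot_modal_logic,Dosen1989}); the genuinely new content sits entirely in the passage $H \rightsquigarrow H^\star \rightsquigarrow \ue H$, where the real work has already been carried out.
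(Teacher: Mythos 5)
Your proposal is correct and follows essentially the same route as the paper: the paper's proof likewise obtains closure of $\mathcal K$ under ultrafilter extensions by combining Lemma~\ref{lem:main} with Remark~\ref{rem:elementary}, relative CPL-elementary closure, and closure under bounded morphic images---exactly your passage $H \rightsquigarrow H^\star \rightsquigarrow \ue H$. The only difference is packaging: where you unfold the algebraic argument (Birkhoff's HSP theorem, $\prod_i G_i^+ \cong (\bigsqcup_i G_i)^+$, and the dualization of embeddings and surjections under $\uf(-)$), the paper cites this wholesale as \cite[Theorem~7.23]{hansen03:_monot_modal_logic}, the known Goldblatt--Thomason theorem for classes of monotonic frames closed under ultrafilter extensions, so the duality bookkeeping you defer to the literature is precisely the content of that cited result.
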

\begin{proof}
  Let $\mathcal K$ be a class of monotonic neighborhood frames
  that is closed under CPL-elementary equivalence
  relative to a class $\mathcal K_0$ in Table~\ref{tab:defs}.
  We show the ``if'' case.
  Suppose that $\mathcal K$ is closed under bounded morphic images,
  generated subframes, and disjoint unions, 
  and it reflects ultrafilter extensions.
  Apply Lemma~\ref{lem:main} and Remark~\ref{rem:elementary}
  to conclude that $\mathcal K$ is closed under ultrafilter extensions.
  Note that the hypothesis of \cite[Theorem 7.23]{hansen03:_monot_modal_logic}
  is satisfied,
  and we conclude that 
  $\mathcal K$ is modally definable.
\end{proof}

\begin{example}
  As an example,
  we show that the image $\mathcal K$ under ${}^*$ of 
  the class of discrete topological spaces is modally definable.
  For a quasi-filter frame $F$, $F$ is a ${}^*$-image 
  of a discrete topological space
  if and only if $F \models \zenbu{x} \neg x\mop_z y \neq x$ and $F \models \zenbu{x} x\mop_y y = x$.
  Hence,
  $\mathcal K$ is CPL-elementary relative to the class of quasi-filter frames,
  and
  the Goldblatt-Thomason Theorem is applicable to $\mathcal K$.
  It is easy to check that $\mathcal K$ is closed under bounded morphic images,
  generated subframes, and disjoint unions,
  so it suffices to show that $\mathcal K$ reflects ultrafilter extensions.
  Assume that for a neighborhood frame $F = (F, N)$
  its ultrafilter extension $\ue F = (\ue F, N^\sigma)$ is in $\mathcal K$.
  We show that $F \in \mathcal K$.
  The class of topological frames is defined
  by modal formulas $\mord p \land \mord q \to \mord(p \land q)$,
  $\mord p \to p$,
  and $\mord\mord p \to \mord p$ \cite{vanBenthem2007},
  so we may assume that $F$ is topological
  as ultrafilter extensions reflect modally definable classes.
  Let $w\in F$ be arbitrary, and
  let $u$ be the principal ultrafilter generated by $w$,
  so $u\in \ue F$.
  Note that $U \in N^\sigma(u) \iff u \in U$ 
  since $\ue F$ is the ${}^*$-image of a discrete space.
  Recall the definition of $N^\sigma$ in (\ref{eq:ultraframe}).
  The singleton $\{u\}$ is in $N^\sigma(u)$,
  and this has to be witnessed by $K = \0$ or $K = \{u\}$ 
  according to (\ref{eq:ultraframe}) of Definition~\ref{def:algebraic-concepts}.(\ref{item:please}).
  Suppose $K = \0$.
  Then (\ref{eq:ultraframe}) implies
  that $\mord^{F^+} \0 \in u$ among other things 
  (recall that $A$ in (\ref{eq:ultraframe}) is $F^+$ here).
  However, since $F$ is topological, $\mord^{F^+} \0 = \0$,
  and it cannot belong to an ultrafilter $u$.
  Hence, $K = \{u\}$.
  Again by (\ref{eq:ultraframe}),
  for all $a \subseteq F$ such that $[a] \supseteq K = \{u\}$,
  i.e., $a \in u$, we have that $\mord^{F^+} a \in u$.
  Let $a = \{w\}$,
  so $a \in u$.
  Since the set $u$ is an ultrafilter,
  we have $a \wedge \mord^{F^+} a \neq \0$,
  that is,
  $a \cap \{w \in F\mid a \in N(w)\} \neq \0$;
  this implies $\{w\} \in N(w)$.
  Since $w$ was arbitrary,
  we conclude that $F\in \mathcal K$.
  We have shown that $\mathcal K$ is modally definable;
  in fact, it is defined by $p \to \mord p$ 
  in addition to the definition of the class of
  topological neighborhood frames.
\end{example}

\subsection{Fine's Canonicity Theorem}
By the dual equivalence between monotonic modal logics
and varieties of BAMs \cite[Chapter 7]{hansen03:_monot_modal_logic},
we will state our version of Fine's Canonicity Theorem in an algebraic manner.
Our presentation of the proof of the theorem is modeled after 
that of the classical version of the theorem in \cite{venema07:_algeb_coalg}.

For a class $\mathcal K$ of neighborhood frames,
we write $\mathcal K^+$ for the class $\{F^+ \mid F \in \mathcal K\}$.

\begin{lemma}\label{lem:twoclosures}
  Let $\mathcal K$ be a class CPL-elementary relative to any of the classes
  in Table~\ref{tab:defs}.
  Let $\mathcal S \supseteq \mathcal K^+$ be the least class of BAMs
  closed under subalgebras.
  \begin{enumerate}
  \item
    $\mathcal S$ is closed under canonical extensions.
  \item
    $\mathcal S$ is closed under ultraproducts.
  \end{enumerate}
\end{lemma}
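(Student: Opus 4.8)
The plan is to prove the two closure properties of $\mathcal S$ separately, exploiting that $\mathcal S$ consists of subalgebras of complex algebras $F^+$ for $F \in \mathcal K$. For part~(1), recall that $\mathcal K$ is CPL-elementary relative to one of the classes $\mathcal K_0$ in Table~\ref{tab:defs}, say via an $L_=$-theory $T$, so $\mathcal K = \{F \in \mathcal K_0 \mid F \models T\}$. By Remark~\ref{rem:elementary} it suffices to treat the cases where $\mathcal K_0$ is the class of monotonic or of quasi-filter frames. The key observation is that canonical extension of a BAM is dual to ultrafilter extension of the associated neighborhood frame: for a complex algebra we have $(F^+)^\sigma = (\ue F)^+$ by the Proposition following Definition~\ref{def:algebraic-concepts}. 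So to close $\mathcal S$ under canonical extensions I would take $A \in \mathcal S$, realize it as a subalgebra $A \hookrightarrow F^+$ for some $F \in \mathcal K$, and aim to show $A^\sigma$ embeds into the complex algebra of some frame in $\mathcal K$.

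\textbf{Using the main lemma.}
The engine here is Lemma~\ref{lem:main}. Given $F \in \mathcal K \subseteq \mathcal K_0$, that lemma produces $G \equiv_{L_=} F$ with $G \in \mathcal K_0$ and a surjective bounded morphism $f : G \twoheadrightarrow \ue F$. Since $G \equiv_{L_=} F$ and $F \models T$, we get $G \models T$, whence $G \in \mathcal K$. Dually, the surjective bounded morphism $f$ gives an embedding $f^+ : (\ue F)^+ \hookrightarrow G^+$ of complex algebras (by the Proposition characterizing bounded morphisms as exactly those $f$ for which $f^+$ is a homomorphism, together with surjectivity yielding injectivity of $f^+$). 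Now for $A \in \mathcal S$ with $A \hookrightarrow F^+$, canonical extension is functorial on embeddings, so $A^\sigma \hookrightarrow (F^+)^\sigma = (\ue F)^+ \hookrightarrow G^+$. Since $G \in \mathcal K$, we have $G^+ \in \mathcal K^+ \subseteq \mathcal S$, and as $\mathcal S$ is closed under subalgebras, $A^\sigma \in \mathcal S$. The step I expect to be most delicate is verifying that the canonical-extension functor sends the embedding $A \hookrightarrow F^+$ to an embedding $A^\sigma \hookrightarrow (F^+)^\sigma$ that is compatible with the monotone operator $\mord^\sigma$; this uses that $\sigma$ is functorial and preserves embeddings, a standard but not entirely free fact about canonical extensions of BAMs that I would cite from \cite{venema07:_algeb_coalg}.

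\textbf{Closure under ultraproducts.}
For part~(2), let $(A_i)_{i \in I}$ be a family in $\mathcal S$ with ultrafilter $D$ over $I$, and form $\prod_D A_i$. Each $A_i$ embeds into $F_i^+$ for some $F_i \in \mathcal K$, and since ultraproducts commute with subalgebra embeddings, $\prod_D A_i$ embeds into $\prod_D F_i^+$. The plan is to exhibit a frame $H \in \mathcal K$ whose complex algebra $H^+$ receives an embedding from $\prod_D F_i^+$; the natural candidate is a quasi-ultraproduct of $(F_i)_i$. By Proposition~\ref{prop:qup}(1) each class in Table~\ref{tab:defs} admits quasi-ultraproducts, so a quasi-ultraproduct $H$ of $(F_i)_i$ exists in $\mathcal K_0$; by Proposition~\ref{prop:qup}(2), since each $F_i \models T$, also $H \models T$, giving $H \in \mathcal K$. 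The remaining task, which I expect to be the main obstacle of this part, is to produce an embedding of BAMs $\prod_D F_i^+ \hookrightarrow H^+$. At the Boolean level this is the classical fact that the ultraproduct of powerset algebras embeds into the powerset algebra of the ultraproduct, via sending the class of $(X_i)_i$ to the induced subset of $\prod_D F_i$; one checks this is a well-defined injective Boolean homomorphism, and then verifies it commutes with the monotone operators using precisely the defining clause of the quasi-ultraproduct neighborhood function, which says $A \in N^H(w) \iff A_i \in N^i(w(i))$ for induced $A$. Once this embedding is in hand, $\prod_D A_i \hookrightarrow \prod_D F_i^+ \hookrightarrow H^+ \in \mathcal K^+ \subseteq \mathcal S$, and closure under subalgebras finishes the argument.
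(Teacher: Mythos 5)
Your proposal is correct and takes essentially the same approach as the paper: part (1) is exactly the paper's argument (embed $A^\sigma \hookrightarrow (F^+)^\sigma = (\ue F)^+ \hookrightarrow G^+$ using Lemma~\ref{lem:main} and Remark~\ref{rem:elementary}, with preservation of embeddings under $(\cdot)^\sigma$ cited from the literature), and part (2) is the paper's embedding $\iota$ of $\prod_D F_i^+$ into the complex algebra of a quasi-ultraproduct via induced subsets. The differences are purely expository: you spell out reductions the paper leaves implicit (why $G$ and the quasi-ultraproduct lie in $\mathcal K$, why it suffices to treat complex algebras), while deferring the computational check that $\iota$ commutes with $\mord$, which the paper carries out in a displayed chain of equivalences.
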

\begin{proof}
  \hfill
  \begin{enumerate}
  \item 
    Let $A \in \mathcal S$.
    For some $F \in \mathcal K$
    we have $A \hookrightarrow F^+$.
    By duality theory \cite[Theorem 5.4]{GEHRKE2001345}, we have
    $A^\sigma \hookrightarrow (F^+)^\sigma$.
    By Lemma~\ref{lem:main} and Remark~\ref{rem:elementary},
    there is $G \in \mathcal K$ with $(F^+)^\sigma \hookrightarrow G^+$.
    Thus, we have $A^\sigma \in \mathcal S$ by definition.

  \item 
    \newcommand{\pu}{\mathrm{pu}}
    \newcommand{\cm}{\mathrm{cm}}
    It suffices to do the following:
    given an ultraproduct $\prod_D F_i^+$
    where $I$ is an index set, $D$ is an ultrafilter over $I$,
    and $(F_i)_i$ is a family of neighborhood frames in $\mathcal K$,
    we show that the ultraproduct embeds into
    $\left( \prod_D F_i \right)^+$,
    where $\prod_D F_i$ is a quasi-ultraproduct of $(F_i)_i$ modulo $D$.
    In fact, we show that
    $\iota: \prod_D F_i^+ \to \left( \prod_D F_i \right)^+$
    defined by
    \[
      s \in \iota(a) \iff \{i \mid s(i) \in a(i) \} \in D,
    \]
    where $s \in \prod_D F_i$ and $a \in \prod_DF_i^+$
    is a BAM embedding
    (we do not write equivalence classes modulo $D$ explicitly;
    it is easy to see that $\iota$ is well defined).
    It can easily be seen that $\iota$ is a Boolean algebra embedding.
    We show that $\iota \circ \mord^\pu = \mord^\cm \circ \iota$,
    where $\mord^\pu$ and $\mord^\cm$ are the operations of
    the domain and the target of $\iota$, respectively.
    Let $N$ be the neighborhood function of the quasi-ultraproduct.
    We write $\mord^i$ and $N^i$ for the operation of $F_i^+$.
    Note that for all $a$ the set $\iota(a)$ is
    an induced subset of the quasi-ultraproduct;
    if we let $\pi_i(A)$ be the projection of an induced subset $A$
    of the quasi-ultraproduct onto the coordinate $i$,
    then  $\{i \mid \pi_i(\iota(a)) = a(i)\} \in D$.
    We now have
    \begin{align*}
      s \in (\iota \circ \mord^\pu)(a)
      &\iff \{i \mid s(i) \in (\mord^\pu(a))(i)\} \in D\\
      &\iff \{i \mid s(i) \in \mord^i(a(i))\} \in D \tag{*}\\
      &\iff \{i \mid s(i) \in \mord^i(\pi_i(\iota(a)))\} \in D \\
      &\iff \{i \mid \iota(a) \in N^i(s(i))\} \in D\\
      &\iff \iota(a) \in N(s)\\
      &\iff s \in (\mord^\cm \circ i)(a),
    \end{align*}
    where we have the equivalence (*)
    since \[\{i \mid (\mord^\pu(a))(i) = \mord^i(a(i))\} \in D.\qedhere\]
  \end{enumerate}
\end{proof}

\begin{theorem}\label{thm:main}
  Let  $\mathcal K$ be a class CPL-elementary relative to
  any of the classes in Table~\ref{tab:defs}.
  The variety of BAMs generated by $\mathcal K^+$ is canonical,
  \ie, closed under canonical extensions.
\end{theorem}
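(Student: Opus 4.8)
The plan is to deduce the theorem from Lemma~\ref{lem:twoclosures} together with the classical J\'onsson--Fine argument for canonicity, following \cite{venema07:_algeb_coalg}. Write $\mathbf H$, $\mathbf S$, $\mathbf P$ for the operators of taking homomorphic images, subalgebras, and products of classes of BAMs, so that the variety generated by $\mathcal K^+$ is $V = \mathbf{HSP}(\mathcal K^+)$. Let $\mathcal S$ be the class of Lemma~\ref{lem:twoclosures}. Since $\mathcal K^+ \subseteq \mathcal S = \mathbf S(\mathcal K^+) \subseteq V$, we have $\mathbf{HSP}(\mathcal S) = V$, so it suffices to work with $\mathcal S$, which by the lemma and its defining property is closed under subalgebras, ultraproducts, and canonical extensions. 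First I would record two functoriality properties of the canonical-extension operation that are standard (\cite{GEHRKE2001345}): $(-)^\sigma$ preserves embeddings --- already used in Lemma~\ref{lem:twoclosures}(1) --- and it preserves surjections, in the sense that a surjective BAM-homomorphism $h \colon B \twoheadrightarrow A$ induces a surjection $h^\sigma \colon B^\sigma \twoheadrightarrow A^\sigma$.

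Given these, the argument reduces the canonicity of $V$ in three steps. Let $A \in V = \mathbf{HSP}(\mathcal S)$ be arbitrary and choose a surjection $h \colon B \twoheadrightarrow A$ with $B \in \mathbf{SP}(\mathcal S)$. By surjection-preservation, $A^\sigma = h^\sigma(B^\sigma)$, so it suffices to show $B^\sigma \in V$. Writing $B \hookrightarrow \prod_i C_i$ with $C_i \in \mathcal S$ and using embedding-preservation, $B^\sigma \hookrightarrow (\prod_i C_i)^\sigma$, so it suffices to show $(\prod_i C_i)^\sigma \in V$. The final and crucial step is the classical lemma on canonical extensions of products: $(\prod_i C_i)^\sigma$ embeds into a product, taken over all ultrafilters $D$ on the index set, of the canonical extensions of the ultraproducts of the family, that is, $(\prod_i C_i)^\sigma \hookrightarrow \prod_{D}(\prod_D C_i)^\sigma$. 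Here closure of $\mathcal S$ under ultraproducts (Lemma~\ref{lem:twoclosures}(2)) places each $\prod_D C_i$ in $\mathcal S$, closure under canonical extensions (Lemma~\ref{lem:twoclosures}(1)) places each $(\prod_D C_i)^\sigma$ in $\mathcal S$, and taking a product and then a subalgebra keeps us inside $V$; hence $(\prod_i C_i)^\sigma \in \mathbf{SP}(\mathcal S) \subseteq V$, and the reductions yield $A^\sigma \in V$.

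The hardest part is the product lemma of the last step --- establishing the embedding of $(\prod_i C_i)^\sigma$ into a product of canonical extensions of ultraproducts --- since this is where the interplay between the canonical-extension operation and infinitary products is genuinely delicate, and it is precisely the point at which closure under ultraproducts is needed. This content is packaged in the classical treatment \cite{venema07:_algeb_coalg}, and since Lemma~\ref{lem:twoclosures} has isolated the two closure properties it requires, the remainder is a direct transcription of that argument to BAMs. A minor point to verify is that the J\'onsson key lemma, usually stated for Boolean algebras with normal operators, carries over to BAMs, whose single monotone operation $\mord$ is handled by the $\sigma$-extension $\mord^\sigma$ of the paper exactly as a normal operator is in the standard proof; the two functoriality facts above are likewise instances of the general theory of canonical extensions of monotone maps (\cite{GEHRKE2001345}), so no genuinely new algebraic input beyond Lemma~\ref{lem:twoclosures} is required.
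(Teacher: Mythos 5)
Your proposal is correct and takes essentially the same approach as the paper: both rest on Lemma~\ref{lem:twoclosures} combined with the Gehrke--Harding principle that a class of BAMs closed under ultraproducts and canonical extensions generates a canonical variety. The only difference is that the paper invokes this principle from \cite{GEHRKE2001345} as a black box, whereas you reconstruct its J\'onsson-style $\mathbf{HSP}$ argument (surjection/embedding preservation plus the ultraproduct--product embedding lemma); that reconstruction is sound, so nothing further is needed.
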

\begin{proof}
  Recall Remark~\ref{rem:elementary}.
  Gehrke and Harding~\cite{GEHRKE2001345} showed that
  if $\mathcal S$ is a class of BAMs closed under ultraproducts
  and canonical extensions, then $\mathcal S$ generates a canonical variety.
  Apply this result for the class $\mathcal S$ in Lemma~\ref{lem:twoclosures}
  to conclude that the variety generated by $\mathcal K^+$,
  which is identical to the variety generated by $\mathcal S$,
  is canonical.
\end{proof}

Note that
Fine's original theorem follows as a special case concerning the
classes of augmented neighborhood frames.

\begin{example}
  Consider the B axiom $p \rightarrow \mord \neg \mord \neg p$,
  which we considered in Example~\ref{exa:B}.
  Recall that it defined a CPL-elementary class $\mathcal K$
  relative to the class of monotonic neighborhood frames.
  By \cite[Proposition 6.5]{hansen03:_monot_modal_logic},
  the variety $\mathcal V$ defined
  by the B axiom is canonical
  and hence generated by $\mathcal K^+$.
  By Theorem~\ref{thm:main},
  the canonicity of $\mathcal V$ is explained
  by the CPL-elementarity of $\mathcal K$.
\end{example}
\begin{remark}
  By Remark~\ref{rem:important},
  Theorem~\ref{thm:main} can be used to show the canonicity of the monotonic modal logic
  axiomatized by any formula of the form~(\ref{eq:veryfirst}).
\end{remark}

\section{Open questions}
As we mentioned in Remarks~\ref{rem:hoge} and \ref{rem:excuse},
one could attempt to use a different notion of elementarity
in stating and proving the results of this article,
but we stuck to coalgebraic predicate logic
due to the limitation of the proof technique we used.
A natural question to ask here would be whether
there is a more expressive first-order-like logic
that admits similar results
possibly by a different kind of proof.
Another question would be to characterize classes
of monotonic neighborhood frames
that admit analogues of the Goldblatt-Thomason theorem 
and Fine's theorem in the same sense
as in the main result of this article.
This question leads to another problem  of
showing results similar to ours
for other coalgebras than
those discussed in this article.

It was suggested to the author that
our version of Fine's theorem could be proved by
using an algebraic result~\cite{gehrke2006macneille},
which implies the original, Kripke-semantic version of the theorem.
The ``proof'' proposed contained a gap,
and therefore it remains open
whether the results in this article follow
from the aforementioned algebraic theorem.
Even if they can indeed be proved in that manner,
we hope that the proof presented here serves our original purpose
of investigating the role of coalgebraic predicate logic
in the study of monotonic modal logics,
especially in the spirit of van Benthem's program~%
\cite{benthem05:_univer_algeb_model_theor} of
re-analyzing algebraic arguments occurring in modal logic
from a model-theoretic perspective.

\paragraph{Acknowledgements.} 
I wish to give special thanks to Wesley Holliday for his extensive
and helpful comments and discussion.  I also wish to thank 
Tadeusz Litak, Lutz Schr\"oder, and Frederik Lauridsen
for useful comments on earlier drafts.
Finally, I gratefully acknowledge financial support
from the Takenaka Scholarship Foundation.  

\bibliographystyle{sl_style/sl.bst}
\bibliography{blah.bib}


\AuthorAdressEmail{Kentar\^o Yamamoto}%
{Group in Logic and the Methodology of Science\\
University of California, Berkeley\\
Berkeley, California USA}%
{ykentaro@math.berkeley.edu}

\end{document}